\numberwithin{equation}{section}
\numberwithin{figure}{section}
\theoremstyle{definition}
		\newtheorem{theorem}{Theorem}[section]
				\newtheorem{proposition}[theorem]{Proposition}
				\newtheorem{lemma}[theorem]{Lemma}
                \newtheorem{example}[theorem]{Example}
                \newtheorem{question}[theorem]{Question}
                \newtheorem{conjecture}[theorem]{Conjecture}
     	        \newtheorem{definition}[theorem]{Definition}
	            \newtheorem{remark}[theorem]{Remark}
\numberwithin{equation}{section}
\newcommand*{\bR}{\ensuremath{\mathbb{R}}}
\newcommand*{\bC}{\ensuremath{\mathbb{C}}}
\newcommand*{\loc}{\mathrm{loc}}
\newcommand*{\closure}[1]{\overline{#1}}
\newcommand*{\bdary}[1]{\partial #1}
\newcommand*{\Wert}{\mathord{\mbox{|\kern-1.5pt|\kern-1.5pt|}}}
\newcommand*{\ie}{\mbox{i.e.}\xspace}
\newcommand{\R}{\mathbb{R}}	
\DeclareMathOperator{\dist}{dist}
\DeclareMathOperator{\diam}{diam}
\DeclareMathOperator{\modulus}{Mod}
\newcommand{\mbr}{{\mathbb{R}}}
\newcommand{\ra}{\rightarrow}
\newcommand{\mbd}{\mathbb D}
\newcommand{\mbs}{\mathbb S}
\newcommand{\mcg}{\mathcal G}
\newcommand{\mcf}{\mathcal F}
\def\XXint#1#2#3{{\setbox0=\hbox{$#1{#2#3}{\int}$}
  \vcenter{\hbox{$#2#3$}}\kern-.5\wd0}}
\definecolor{BLUE}{rgb}{0,0,1}
\begin{document}

\title{Generalized quasidisks and conformality: progress and challenges}
\dedicatory{Dedicated to Professor~Pekka Koskela on the occasion of his $60^\text{th}$ birthday}
\author{Chang-Yu Guo and Haiqing Xu}

\address[Chang-Yu Guo]{Research Center for Mathematics and Interdisciplinary Sciences, Shandong University 266237,  Qingdao, P. R. China}
\email{changyu.guo@email.sdu.edu.cn}

\address[Haiqing Xu]{Department of Mathematics and Statistics, University of Jyv\"askyl\"a, P.O. Box 35, FI-40014 University of Jyv\"askyl\"a, Finland}
\email{hqxu@mail.ustc.edu.cn}

\subjclass[2010]{30C62,30C65}
\keywords{homeomorphism of finite distortion, generalized quasidisk, local connectivity, three point property, cusps}
\thanks{C.-Y.Guo is supported by the Qilu funding of Shandong University (No. 62550089963197). H. Xu is supported by the Academy of Finland (No. 21000046081).}

\begin{abstract}
In this notes, we survey the recent developments on theory of generalized quasidisks. Based on the more or less standard techniques used earlier, we also provide some minor improvements on the recorded results. A few nature questions were posed.
\end{abstract}
\maketitle

\tableofcontents


\section{Introduction}\label{sec:introduction}

\subsection{Quasidisks and conformality}
One calls a Jordan domain $\Omega\subset\bR^2$ a \textit{quasidisk} if it is
the image of the unit disk $\mathbb D$ under a quasiconformal mapping
$f\colon\bR^2\to\bR^2$ of the entire plane.
If $f$ is $K$-quasiconformal, we say that $\Omega$ is a $K$-quasidisk.
Another possibility is to require that $f$ is additionally conformal in
the unit disk $\mathbb D$. 

The following characterization, which shows that there is no real differences between these two definitions, is essentially due to K\"uhnau.
\begin{theorem}[\cite{k88,gh12}]\label{thm:Kuhnau}
A Jordan domain $\Omega\subset \R^2$ is a $K$-quasidisk if and only if $\Omega$ is the image of $\mathbb D$
under a $K^2$-quasiconformal mapping $f\colon \bR^2\to \bR^2$ that is conformal in $\mathbb D$.
\end{theorem}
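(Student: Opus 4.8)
The plan is to reduce both implications to three standard facts of planar quasiconformal theory, working on the Riemann sphere $\widehat{\mathbb C}=\bR^{2}\cup\{\infty\}$, on which a quasiconformal self-map of $\bR^{2}$ is the same as one of $\widehat{\mathbb C}$ fixing $\infty$; all maps below are normalised to fix $\infty$. Write $\mathbb D^{*}=\widehat{\mathbb C}\setminus\overline{\mathbb D}$ and $\Omega^{*}=\widehat{\mathbb C}\setminus\overline\Omega$, let $\mu_{\psi}$ denote the Beltrami coefficient of a quasiconformal $\psi$, and put $k=\tfrac{K-1}{K+1}$. The facts are: \textbf{(a)} the measurable Riemann mapping theorem, with uniqueness of the solution normalised at $0,1,\infty$, so that a solution whose Beltrami coefficient is symmetric under $z\mapsto 1/\bar z$ is itself symmetric and therefore preserves $\mathbb D$; \textbf{(b)} the circle $S^{1}=\partial\mathbb D$ is removable for quasiconformal maps, i.e.\ a self-homeomorphism of $\widehat{\mathbb C}$ that is $K$-quasiconformal on each of $\mathbb D$ and $\mathbb D^{*}$ is $K$-quasiconformal on $\widehat{\mathbb C}$; \textbf{(c)} reflection across $S^{1}$ carries a $K$-quasiconformal self-map of one of $\mathbb D,\mathbb D^{*}$ to a $K$-quasiconformal self-map of the other having the same boundary values on $S^{1}$.

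For ``$K$-quasidisk $\Rightarrow$ conformal in $\mathbb D$'', write $\Omega=g(\mathbb D)$ with $g\colon\widehat{\mathbb C}\to\widehat{\mathbb C}$ $K$-quasiconformal, and choose a Riemann map $\varphi\colon\mathbb D\to\Omega$ with $\varphi(0)=g(0)$. Then $h:=g^{-1}\circ\varphi\colon\mathbb D\to\mathbb D$ is $K$-quasiconformal (the conformal map $\varphi$ followed by the $K$-quasiconformal $g^{-1}$) and fixes $0$; by \textbf{(c)} it reflects to a $K$-quasiconformal $\tilde h\colon\widehat{\mathbb C}\to\widehat{\mathbb C}$ with $\tilde h(\mathbb D)=\mathbb D$ and $\tilde h(\infty)=\infty$. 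Then $f:=g\circ\tilde h$ is $K^{2}$-quasiconformal, $f(\mathbb D)=g(\mathbb D)=\Omega$, and $f|_{\mathbb D}=g\circ h=\varphi$ is conformal, as required.

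The converse is the substantial direction, and is where the exponent $2$ arises. Let $f\colon\widehat{\mathbb C}\to\widehat{\mathbb C}$ be $K^{2}$-quasiconformal and conformal in $\mathbb D$ with $f(\mathbb D)=\Omega$; set $\varphi:=f|_{\mathbb D}$ (conformal onto $\Omega$, with $f|_{S^{1}}=\varphi|_{S^{1}}$ by continuity of $f$) and $\lambda:=\mu_{f}|_{\mathbb D^{*}}$. Since $f$ is $K^{2}$-quasiconformal, the hyperbolic distance in the unit disk of Beltrami values from $0$ to $\lambda(z)$ (that is, the logarithm of the pointwise dilatation of $f$ at $z$) is at most $\log K^{2}$ for a.e.\ $z\in\mathbb D^{*}$. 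The key step is to factor $f|_{\mathbb D^{*}}=f_{2}\circ f_{1}$ with $f_{1}\colon\mathbb D^{*}\to\mathbb D^{*}$ and $f_{2}\colon\mathbb D^{*}\to\Omega^{*}$ \emph{both} $K$-quasiconformal. For this, let $\nu(z)$ be the hyperbolic midpoint of the geodesic from $0$ to $\lambda(z)$, explicitly
\[
\nu(z)=\frac{\lambda(z)}{1+\sqrt{\,1-|\lambda(z)|^{2}\,}}\qquad(z\in\mathbb D^{*}),
\]
so that $\|\nu\|_{\infty}\le k$ (the hyperbolic distance from $0$ to $\nu(z)$ being half that from $0$ to $\lambda(z)$). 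Reflecting $\nu$ symmetrically to $\mathbb D$, solving the Beltrami equation, normalising at $0,1,\infty$ and restricting, fact \textbf{(a)} yields a $K$-quasiconformal self-map $f_{1}$ of $\mathbb D^{*}$ with $\mu_{f_{1}}=\nu$ there and $f_{1}(\infty)=\infty$; put $f_{2}:=f|_{\mathbb D^{*}}\circ f_{1}^{-1}$. By the composition rule for Beltrami coefficients, $|\mu_{f_{2}}|$ equals at each point the pseudo-hyperbolic distance between $\nu(z)$ and $\lambda(z)$, and since $\nu(z)$ halves the $0$-to-$\lambda(z)$ geodesic this is again at most $k$; hence $f_{2}$ is $K$-quasiconformal.

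It remains to glue. By \textbf{(c)}, reflect $f_{1}^{-1}\colon\mathbb D^{*}\to\mathbb D^{*}$ to a $K$-quasiconformal self-map $\rho$ of $\mathbb D$ with $\rho|_{S^{1}}=f_{1}^{-1}|_{S^{1}}$, and define $g\colon\widehat{\mathbb C}\to\widehat{\mathbb C}$ by $g=\varphi\circ\rho$ on $\overline{\mathbb D}$ and $g=f_{2}$ on $\overline{\mathbb D^{*}}$. The two definitions agree on $S^{1}$, where $(\varphi\circ\rho)|_{S^{1}}=\varphi|_{S^{1}}\circ f_{1}^{-1}|_{S^{1}}=f|_{S^{1}}\circ f_{1}^{-1}|_{S^{1}}=f_{2}|_{S^{1}}$; hence $g$ is a self-homeomorphism of $\widehat{\mathbb C}$ fixing $\infty$. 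It is $K$-quasiconformal on $\mathbb D$ (there it is $\varphi\circ\rho$, with $\varphi$ conformal and $\rho$ $K$-quasiconformal) and on $\mathbb D^{*}$ (there it is $f_{2}$), hence $K$-quasiconformal on $\widehat{\mathbb C}$ by \textbf{(b)}; and $g(\mathbb D)=\varphi(\rho(\mathbb D))=\varphi(\mathbb D)=\Omega$. Therefore $\Omega$ is a $K$-quasidisk. The single non-routine ingredient, and the source of the factor $2$, is this factorisation $f|_{\mathbb D^{*}}=f_{2}\circ f_{1}$ with both factors $K$-quasiconformal and $f_{1}$ still a self-map of $\mathbb D^{*}$: the hyperbolic-midpoint choice of $\nu$ is exactly what forces both dilatations down to $k$; everything else is bookkeeping with \textbf{(a)}--\textbf{(c)} and the welding identity $f|_{S^{1}}=\varphi|_{S^{1}}$.
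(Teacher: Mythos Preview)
The paper does not prove this theorem; it is quoted as a known result attributed to K\"uhnau \cite{k88} (see also \cite{gh12}) and serves only as background for the survey, so there is no in-paper argument against which to compare yours.

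Your argument is correct and is essentially the standard route to K\"uhnau's theorem. The easy direction is exactly as expected: precompose the given $K$-quasiconformal $g$ with the reflected $K$-quasiconformal self-map $\tilde h$ to force conformality on $\mathbb D$, at the cost of squaring the dilatation. For the substantive direction, the hyperbolic-midpoint factorisation is precisely the mechanism that produces the exponent $2$: choosing $\nu$ as the midpoint of the geodesic from $0$ to $\lambda$ forces both $|\nu|$ and the pseudo-hyperbolic distance $\left|\tfrac{\lambda-\nu}{1-\bar\nu\lambda}\right|$ (which, by the composition formula, equals $|\mu_{f_2}\circ f_1|$) to be at most $k=(K-1)/(K+1)$, so each factor is $K$-quasiconformal. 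The symmetric extension of $\nu$ across $S^1$ then makes the solution $f_1$ a global self-map of $\widehat{\mathbb C}$ commuting with $z\mapsto 1/\bar z$, hence preserving $S^1$; after that the gluing via removability of $S^1$ is routine. One small redundancy: since your $f_1$ is already built from a symmetric Beltrami coefficient and is therefore a global symmetric homeomorphism, the map $\rho$ you obtain by ``reflecting $f_1^{-1}$'' is nothing other than the restriction of the global $f_1^{-1}$ to $\mathbb D$, so the separate appeal to fact \textbf{(c)} at that step is unnecessary, though harmless.
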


The concept of a quasidisk is central in the theory of planar quasiconformal
mappings; see, for example, \cite{a06,aim09,gh12,lv73}. There are two well-known simple geometric characterization of quasidisks. The first one was given by Ahlfors.

\begin{theorem}[\cite{a63}]\label{thm:Ahlfors}
A Jordan domain $\Omega\subset \R^2$ is a quasidisk  if and only if it satisfies the three point property: 	
\begin{equation}\label{eq:Ahlfors three point property}
\min_{i=1,2}\diam(\gamma_i)\leq C|P_1-P_2|
\end{equation}
for any distinct pair of points $P_1,P_2\in \partial \Omega$, where $\gamma_1$ and $\gamma_2$ are the components of $\partial\Omega\backslash \{P_1,P_2\}$ and $C$ is a constant that depends on $\Omega$.
\end{theorem}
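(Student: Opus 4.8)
The plan is to prove the two implications by classical means: for necessity, that global quasiconformal maps of the plane are quasisymmetric; for sufficiency, to follow Ahlfors and pass through a quasiconformal reflection in $\partial\Omega$. So suppose first that $\Omega=f(\mathbb D)$ for some $K$-quasiconformal homeomorphism $f\colon\bR^2\to\bR^2$. The key input is the standard fact that such an $f$ is $\eta$-quasisymmetric for some control function $\eta=\eta_K$ depending only on $K$; this follows from the quasi-invariance of the moduli of ring domains together with the Teichm\"uller and Gr\"otzsch extremal ring estimates. Given distinct $P_1,P_2\in\partial\Omega$, write $P_i=f(z_i)$ with $z_i\in\partial\mathbb D$ and let $\alpha$ be the shorter of the two open arcs of $\partial\mathbb D\setminus\{z_1,z_2\}$. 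For every $w\in\alpha$ one has $|w-z_1|\le|z_1-z_2|$, so applying quasisymmetry to the triple $(z_1,w,z_2)$ gives $|f(w)-P_1|\le\eta_K(1)\,|P_1-P_2|$, whence $\diam f(\alpha)\le 2\eta_K(1)\,|P_1-P_2|$. Since $f$ maps $\partial\mathbb D$ homeomorphically onto $\partial\Omega$, the set $f(\alpha)$ is one of the two components $\gamma_1,\gamma_2$ of $\partial\Omega\setminus\{P_1,P_2\}$, so \eqref{eq:Ahlfors three point property} holds with $C=2\eta_K(1)$.

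For the converse, assume $\partial\Omega$ has the three point property; after a preliminary M\"obius change of coordinates we may take $\Omega$ to be bounded, and we write $\Omega^{*}=\overline\bC\setminus\overline\Omega$. The argument has two steps: (i) construct a \emph{quasiconformal reflection} in $\partial\Omega$, that is, a quasiconformal map $\lambda\colon\overline\bC\to\overline\bC$ with $\lambda|_{\partial\Omega}=\id$ and $\lambda(\Omega)=\Omega^{*}$; and (ii) deduce from it that $\Omega$ is a quasidisk. Step (ii) is straightforward: let $\phi\colon\mathbb D\to\Omega$ be a Riemann map, which extends to a homeomorphism of the closures by Carath\'eodory's theorem (applicable since $\partial\Omega$ is a Jordan curve), and let $\Lambda(z)=1/\bar z$ denote reflection in $\partial\mathbb D$. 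Define $F=\phi$ on $\overline{\mathbb D}$ and $F=\lambda\circ\phi\circ\Lambda$ on $\mathbb D^{*}$; on $\partial\mathbb D$ both formulas equal $\phi$ (there $\Lambda=\id$ and $\lambda|_{\partial\Omega}=\id$), so $F$ is a homeomorphism of $\overline\bC$, conformal in $\mathbb D$ and quasiconformal in $\mathbb D^{*}$. Since $\partial\mathbb D$ is removable for quasiconformality (the pasting lemma across an analytic curve), $F$ is globally quasiconformal with $F(\mathbb D)=\Omega$; composing with a M\"obius automorphism of $\mathbb D$ so as to fix $\infty$ yields a quasiconformal self-map of $\bR^2$ carrying $\mathbb D$ onto $\Omega$, so $\Omega$ is a quasidisk.

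The main obstacle is step (i): manufacturing the quasiconformal reflection out of the bare metric hypothesis \eqref{eq:Ahlfors three point property}. The idea is to build $\lambda$ explicitly in a collar neighborhood of $\partial\Omega$: the three point property says that pairs of points of $\partial\Omega$ that are close relative to the subtended boundary arc are quantitatively separated, and this lets one reflect a point lying just inside $\partial\Omega$ to ``the corresponding point'' lying just outside, at comparable distance from $\partial\Omega$, with the resulting collar map having distortion bounded in terms of $C$; one then interpolates between this collar map and a fixed reference map (e.g.\ the identity) away from $\partial\Omega$ to obtain a global quasiconformal involution fixing $\partial\Omega$. Pinning down the ``corresponding point'' and estimating the distortion of the interpolation is the delicate part of Ahlfors' original argument \cite{a63}. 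One can also bypass reflections: Carath\'eodory's theorem makes the conformal welding map $h=\psi^{-1}\circ\phi\colon\partial\mathbb D\to\partial\mathbb D$ well defined, where $\psi$ is a Riemann map onto $\Omega^{*}$; the three point property together with the Koebe distortion theorem and extremal-length comparisons forces $h$ to be quasisymmetric on the circle; the Beurling--Ahlfors extension then produces a quasiconformal extension of $h$, and gluing it to $\phi$ exactly as above again realizes $\Omega$ as a quasidisk --- indeed as the image of $\mathbb D$ under a map conformal in $\mathbb D$, in accordance with Theorem~\ref{thm:Kuhnau}.
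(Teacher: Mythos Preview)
The paper does not contain its own proof of this statement: Theorem~\ref{thm:Ahlfors} is quoted as Ahlfors' classical characterization with a bare citation to~\cite{a63}, and no argument is given. So there is nothing in the paper to compare against, and I can only assess your proposal on its own merits.

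Your necessity direction is correct and complete: the passage from global quasiconformality to quasisymmetry is standard, and the triangle-on-the-circle estimate is clean.

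For sufficiency, the architecture is right (either build a quasiconformal reflection and glue, or show the welding is quasisymmetric and invoke Beurling--Ahlfors), but you do not actually carry out the central step in either variant. In the reflection route you describe in words what a collar reflection should do and then write ``Pinning down the `corresponding point' and estimating the distortion of the interpolation is the delicate part of Ahlfors' original argument~\cite{a63}''; that is a deferral, not a proof. In the welding route you assert that the three point property ``together with the Koebe distortion theorem and extremal-length comparisons forces $h$ to be quasisymmetric'', but you give no modulus estimate or inequality that actually yields the quasisymmetry bound. Either of these is exactly the nontrivial content of the theorem, so as written the sufficiency argument has a genuine gap at its core. If you want a self-contained proof, the most economical path in the spirit of this paper is the welding one: use the three point property to bound $\modulus(E,F,\Omega)$ and $\modulus(E,F,\Omega^{*})$ for adjacent boundary arcs via Lemma~\ref{lemma:modululower}, transfer these bounds through the conformal maps $\phi$ and $\psi$, and conclude that $\rho_h$ is bounded --- this is the quasisymmetric case of the estimate the paper later does carefully in the proof of Theorem~\ref{thm:general via nonlinear local connectivity}.
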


Another one is due to Gehring. 
\begin{theorem}[\cite{g82,gh12}]\label{thm:Gehring}
A Jordan domain $\Omega\subset \R^2$ is a quasidisk  if and only if it is linearly locally connected.
\end{theorem}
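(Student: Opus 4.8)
The plan is to prove both implications, using Theorem~\ref{thm:Ahlfors} as a bridge where convenient. Recall that a domain $\Omega$ is \emph{linearly locally connected} (LLC) if there is a constant $c\geq 1$ such that for every $x_0\in\bR^2$ and every $r>0$: (LLC$_1$) any two points in $B(x_0,r)\cap\Omega$ can be joined by a continuum in $B(x_0,cr)\cap\Omega$, and (LLC$_2$) any two points in $\Omega\setminus B(x_0,r)$ can be joined by a continuum in $\Omega\setminus B(x_0,r/c)$. The forward direction is the easy one: the unit disk $\mbd$ is readily seen to be LLC with a universal constant, and linear local connectivity is preserved (with a controlled change of constant) under quasiconformal homeomorphisms of $\bR^2$, since such maps are quasisymmetric on $\bR^2$ and quasisymmetry distorts relative distances by a controlled amount. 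Hence if $\Omega = f(\mbd)$ with $f$ quasiconformal on $\bR^2$, then $\Omega$ is LLC.

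For the converse, suppose $\Omega$ is a Jordan domain that is LLC with constant $c$. First I would verify that $\Omega$ and its exterior domain $\Omega^* = \bR^2\setminus\closure{\Omega}$ are each LLC (the exterior inherits an LLC-type condition from that of $\Omega$, essentially by a reflection/inversion argument combined with the Jordan curve structure), and in particular that $\partial\Omega$ is a bounded, connected, locally connected set with no cut points. Then I would deduce the three point property \eqref{eq:Ahlfors three point property}: given $P_1,P_2\in\partial\Omega$ with $|P_1-P_2|=d$, the two arcs $\gamma_1,\gamma_2$ of $\partial\Omega\setminus\{P_1,P_2\}$ separate the plane, and if both had diameter larger than $Cd$ one could produce, using LLC$_1$ inside $\Omega$ and LLC$_1$ inside $\Omega^*$ on the ball $B\big(\tfrac{P_1+P_2}{2}, \tfrac{d}{2}\big)$, a pair of crossing continua --- one in $\Omega$ and one in $\Omega^*$ --- joining points on opposite arcs while staying in a bounded multiple of that ball, which is topologically impossible by a Jordan curve separation argument. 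This forces $\min_i\diam(\gamma_i)\leq Cd$ with $C=C(c)$, and then Theorem~\ref{thm:Ahlfors} yields that $\Omega$ is a quasidisk.

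The main obstacle is the implication LLC $\Rightarrow$ three point property, specifically making the separation argument quantitative and correctly handling the two regimes of the LLC condition: one needs LLC$_1$ (connecting nearby points through a slightly larger ball) to produce the obstructing continua, but one also needs to control that these continua do not escape to infinity or wrap around the complementary arc, which is where the boundedness of $\partial\Omega$ and an application of LLC$_2$ (to rule out the continuum passing far away) enter. An alternative route that avoids Theorem~\ref{thm:Ahlfors} entirely would be to construct the quasiconformal map directly: uniformize $\Omega$ and $\Omega^*$ by Riemann maps, show the boundary correspondence is quasisymmetric using the LLC condition (via conformal modulus estimates of quadrilaterals/ring domains, in the spirit of Gehring's original approach), extend to a global quasiconformal map by the Beurling--Ahlfors-type extension adapted to the circle, and glue; this is more self-contained but technically heavier, so I would present the shorter argument through Ahlfors' theorem as the primary proof and remark on the modulus-based alternative.
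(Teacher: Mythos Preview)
The paper does not supply a proof of Theorem~\ref{thm:Gehring}; it is quoted as a classical result with references to \cite{g82,gh12}. Your overall strategy---reducing the converse to the three point property and then invoking Theorem~\ref{thm:Ahlfors}---is exactly the route the paper implicitly endorses: later in Section~\ref{sec generzlized qd} it cites \cite[Lemma~3.1]{g13} for the equivalence between the (generalized) three point property and (generalized) local connectivity, whose linear case is precisely the step you need.

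There is, however, a genuine gap in your sketch of LLC $\Rightarrow$ three point property. Your crossing-continua argument requires LLC$_1$ for the exterior $\Omega^*$, which is not part of the hypothesis, and your proposed derivation of it ``by a reflection/inversion argument'' does not work: no quasiconformal reflection across $\partial\Omega$ is available before you know $\Omega$ is a quasidisk, and a M\"obius inversion sends $\Omega^*$ to another Jordan domain, not back to $\Omega$. In fact the implication ``$\Omega$ LLC $\Rightarrow$ $\Omega^*$ LLC'' for Jordan domains is most naturally proved \emph{via} the three point property (a symmetric condition on $\partial\Omega=\partial\Omega^*$), so invoking it here is circular. The standard fix uses only the LLC data on $\Omega$ itself: with $x_0=\tfrac12(P_1+P_2)$ and $d=|P_1-P_2|$, use LLC$_1$ in $\Omega$ (plus a limiting argument at the boundary) to produce a continuum $\sigma\subset\closure{\Omega}\cap B(x_0,cd)$ joining $P_1$ to $P_2$; this $\sigma$ separates $\Omega$ into two pieces abutting $\gamma_1$ and $\gamma_2$. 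If both arcs had diameter exceeding $2c^2 d$, pick interior points near $\gamma_1$ and near $\gamma_2$ lying outside $B(x_0,c^2 d)$ and apply LLC$_2$ in $\Omega$ to join them in $\Omega\setminus B(x_0,cd)\subset\Omega\setminus\sigma$, contradicting the separation. Your closing paragraph gestures at this role of LLC$_2$ but does not assemble the pieces; once you replace the $\Omega^*$ step by this cross-cut argument the proof goes through.
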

Recall that a domain $\Omega\subset \R^2$ is said to be linearly locally connected (LLC)
if there is a constant $C\geq 1$ so that
\begin{itemize}
\item (LLC-1) each pair of points in $B(x,r)\cap \Omega$ can be joined by an arc in
$B(x,Cr)\cap \Omega$, and
\item (LLC-2) each pair of points in $\Omega\backslash B(x,r)$ can be joined by an arc
in $\Omega\setminus B(x,C^{-1}r)$.
\end{itemize}

For more on the functions theoretic properties of a quasidisk; see the monograph \cite{gh12}.

\subsection{Generalized quasidisks}

A substantial part of the theory of quasiconformal mappings has recently been extended in a natural form to the setting of mappings of finite distortion with suitable integrability restrictions on the distortion function - particular with locally exponentially integrable distortion - see the monographs~\cite{im01,hk14} for a comprehensive overview. Here, we only briefly recall the basic definitions.

\begin{definition}\label{def:homeo of finite distortion}
We call a homeomorphism $f\colon\Omega\to f(\Omega)\subset\bR^2$ a homeomorphism of finite
distortion if $f\in W_{\loc}^{1,1}(\Omega,\bR^2)$ and
\begin{equation}\label{eq:disteq}
  \|Df(x)\|^2\leq K(x)J_f(x) \quad \text{almost everywhere in } \Omega,
\end{equation}
for some measurable function~$K(x)\geq1$ that is finite almost everywhere. 
\end{definition}
Recall here that
$J_f\in L_{\loc}^{1}(\Omega)$ for each homeomorphism $f\in W_{\loc}^{1,1}(\Omega,\bR^2)$ (see for instance~\cite{aim09}).
In the distortion inequality~\eqref{eq:disteq}, $Df(x)$ is the formal differential of~$f$ at the
point~$x$ and $J_f(x):=\det Df(x)$ is the Jacobian. The norm of~$Df(x)$ is defined as
\begin{equation*}
  \|Df(x)\|:=\max_{e\in\bdary{\mathbb D}} |Df(x)e|.
\end{equation*}
For a homeomorphism of finite distortion it is convenient to write $K_f$ for the optimal distortion
function. This is obtained by setting $K_f(x)=\|Df(x)\|^2/J_f(x)$ when $Df(x)$ exists and $J_f(x)>0$,
and $K_f(x)=1$ otherwise. The distortion of~$f$ is said to be locally $\lambda$-exponentially
integrable if $\exp(\lambda K_f(x))\in L_{\loc}^1(\Omega)$, for some $\lambda>0$.
Note that if we assume $K_f(x)$ to be bounded, $K_f\le K,$
we recover the class of $K$-quasiconformal mappings.
For this class, we have (see for instance \cite{aim09}) that
\begin{equation}\label{5.25-3}
f \in W^{1,p} _{\loc} (\Omega) \qquad \forall q <2K/(K-1).
\end{equation}

Following \cite{gkt13,g13}, we extend the definition of a quasidisk to the category of mappings of finite distortion, with an initial motivation to build a reasonable geometric counterpart for the theory of mappings with finite distortion. 
\begin{definition}[Generalized quasidisk]\label{def:generalized quasidisk}
A Jordan domain $\Omega\subset\bR^2$ is a generalized quasidisk if it is the image of the unit disk $\mathbb D$ under a homeomorphism $f\colon\bR^2\to\bR^2$ that is conformal in $\mathbb D$ and has locally exponentially integrable distortion in the entire plane.
\end{definition}

Another possibility for the definition of a generalized quasidisk is to remove the extra conformality requirement for the global homeomorphism $f$ in Definition \ref{def:generalized quasidisk} - we shall refer to the latter case a generalized quasidisk of second kind. However, unlike the case of a quasidisk, this leads to different classes of domains. Before turning to more details, we introduce two model domains that play an important role in understanding the geometry of a generalized quasidisk.

\begin{example}[Outward-pointing cusps]\label{exam:outward-pointing cusps}
For each $s>0$, the model outward-pointing cusp domain is given as
\begin{equation}\label{eq:cuspdomain}
  \Omega_s=\{(x_1,x_2)\in\bR^2:0<x_1<1,|x_2|<x_1^{1+s}\}\cup B(x_s,r_s),
\end{equation}
where $x_s=(s+2,0)$ and $r_s=\sqrt{(s+1)^2+1}$. 
\end{example}

\begin{example}[Inward-pointing cusps]\label{exam:inward-pointing cusps}
For each $s>0$, the model inward-pointing cusp domain is given as 
\begin{equation}\label{innercuap}
\Delta_s:= B(x'_s,r'_s)\setminus\{(x_1,x_2)\in\bR^2:x_1\ge 0,|x_2|\le
x_1^{1+s}\}
\end{equation}
where $x'_s=(-s,0)$ and $r'_s=\sqrt{(s+1)^2+1}.$
\end{example}

These cusp domains do not satisfy the Ahlfors three point property, and thus, they are not quasidisks. But they are generalized quasidisks according to Definition \ref{def:generalized quasidisk}. Indeed, we have the following result regarding the outward-pointing cusps.

\begin{theorem}[\cite{kt10,kt07}]\label{thm:outer cusp domain}
If $\lambda<\frac{2}{s}$, then there is a homeomorphism $f\colon \R^2\to \R^2$ with locally $\lambda$-exponentially integrable distortion such that $\Omega_s=f(\mathbb{D})$, while, this cannot happen if $\lambda>\frac{2}{s}$. Furthermore, if we require $\Omega_s$ to be a generalized quasidisk, then the above critical bound for $\lambda$ is $\frac{1}{s}$.
\end{theorem}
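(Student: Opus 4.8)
The plan is to prove the two realizability statements by explicit construction and the two non-realizability statements by a modulus/capacity argument, in each case reducing everything to a neighbourhood of the cuspidal tip $P_0=(0,0)$: outside a small neighbourhood of $P_0$ the boundary $\partial\Omega_s$ agrees with a circular arc, so any homeomorphism built near $P_0$ can be glued to an explicit bi-Lipschitz map on the complement of that neighbourhood without affecting the homeomorphism property or the local exponential integrability of the distortion. The organising device is to pass to logarithmic half-strip coordinates: if $p\in\partial\mathbb D$ is the point mapped to $P_0$, then $z\mapsto-\log(1-z)$ carries a neighbourhood of $p$ in $\mathbb D$, in $\R^2\setminus\overline{\mathbb D}$, in $\Omega_s$ and in $\R^2\setminus\overline{\Omega_s}$ onto half-infinite strips running to $+\infty$, of conformal widths $\pi$, $\pi$, $\pi$ and $2\pi$. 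Two geometric facts are used: the cuspidal collars $\{r<x_1<2r\}\cap\Omega_s$ have conformal modulus $\asymp r^{-s}$, so $\Omega_s$ near $P_0$ is conformally a half-strip and the Riemann map $\Phi\colon\mathbb D\to\Omega_s$ satisfies $\dist(\Phi(z),P_0)\asymp\bigl(\log\tfrac1{|z-p|}\bigr)^{-1/s}$; and the complement of a zero-angle cusp is a ``$2\pi$-angle'', hence conformally a half-strip of double width. Because distortion is unchanged under pre- and post-composition with conformal maps, it suffices to study homeomorphisms between these half-strips.

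For the realizability when $\lambda<2/s$ one constructs $f$ near $P_0$ so that, in strip coordinates $u+iv$, $f|_{\mathbb D}$ is the compression $(u,v)\mapsto(\tfrac s2 u^2,\,v)$ and $f|_{\R^2\setminus\overline{\mathbb D}}$ is $(u,v)\mapsto(\tfrac2s\log u,\,2v)$, the factor $2$ on $v$ absorbing the width jump $\pi\to2\pi$; the two pieces agree on $\partial\mathbb D$ because each sends the boundary point at strip-height $u$ to the point of $\partial\Omega_s$ with $x_1\asymp u^{-2/s}$. A one-line computation of the two singular values gives $K_f(x)\asymp s\log\tfrac1{|x-p|}$ on both sides, whence $\exp(\lambda K_f)\lesssim|x-p|^{-\lambda s}$, which is in $L^1_{\loc}$ precisely when $\lambda<2/s$. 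For the generalized-quasidisk version one is forced to take $f|_{\mathbb D}=\Phi$, so $K_f\equiv1$ on $\mathbb D$; the boundary correspondence is then fixed and the only freedom is the exterior extension, which in strip coordinates is $(u,v)\mapsto(\tfrac1s\log u,\,2v)$, yielding $K_f(x)\asymp 2s\log\tfrac1{|x-p|}$ and hence $\exp(\lambda K_f)\in L^1_{\loc}$ exactly for $\lambda<1/s$. The factor $2$ separating the two thresholds is precisely the statement that, with the interior map free, the logarithmic compression forced by the cusp can be shared evenly between the two sides of $\partial\mathbb D$ (the ``quadratic inside, logarithmic outside'' split is optimal: letting the interior map grow faster than quadratically destroys its own integrability, letting it grow more slowly inflates the exterior distortion), whereas conformality of $f|_{\mathbb D}$ loads all of it onto the exterior. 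It remains to check routine points: that these explicit maps are orientation-preserving homeomorphisms of $\R^2$ in $W^{1,1}_{\loc}$, and that the subleading (logarithmic and $\log\log$) corrections hidden in the conformal identifications and in the gluing do not push $\exp(\lambda K_f)$ out of $L^1_{\loc}$ just below the threshold.

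For the non-realizability, suppose $f\colon\R^2\to\R^2$ has locally $\lambda$-exponentially integrable distortion with $f(\mathbb D)=\Omega_s$. One transports the infinitude of the conformal modulus of the cusp into $\mathbb D$: the crossing curve family $\Gamma_r$ of the collar $\{r<x_1<2r\}\cap\Omega_s$ has $\modulus(\Gamma_r)\asymp r^s\to0$, while its $f$-preimage is a family joining two cross-cuts of $\mathbb D$ across a round dyadic annulus about $p$, and the modulus inequality for mappings of finite distortion, $\modulus(f\Gamma)\le\int K_f\rho^2$ over $\Gamma$-admissible $\rho$, converts the mismatch between these moduli into a lower bound for a weighted mean of $K_f$ over that annulus. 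Here exponential integrability enters: combining the lower bound with the $L^1$ control on $\exp(\lambda K_f)$ through a Jensen inequality — the point being that the distortion cannot be concentrated on a small part of the annulus without making $\exp(\lambda K_f)$ large there — forces $\exp(\lambda K_f)\notin L^1_{\loc}$ once $\lambda>2/s$. In the generalized-quasidisk case $f$ is conformal on $\mathbb D$, so only the exterior half of each annulus carries distortion, and the same book-keeping yields the obstruction already when $\lambda>1/s$.

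The step I expect to be the main obstacle is the sharpness of the constants, especially on the non-realizability side: one needs the modulus/capacity inequality in a form precise enough that the exponent it produces is exactly $\lambda s$ (not merely a multiple of it), which requires controlling both the precise asymptotics $\dist(\Phi(z),P_0)\asymp\bigl(\log\tfrac1{|z-p|}\bigr)^{-1/s}$ and the optimal constant in the Koskela--Onninen-type estimate; the mirror difficulty on the construction side is to verify that the $\log\log$-type corrections absorbed into the ``$\asymp$'' do not spoil integrability as $\lambda\uparrow 2/s$ (respectively $\lambda\uparrow 1/s$). The threshold cases $\lambda=2/s$ and $\lambda=1/s$ are untouched by either argument, consistently with the statement.
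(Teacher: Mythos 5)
This theorem is quoted in the survey from \cite{kt07,kt10} (see also \cite{t07}) without proof, so there is no in-paper argument to compare against; I can only assess your plan on its own terms and against the strategy of those references. The constructive half of your plan is sound and is essentially the construction used there: reduce to a neighbourhood of the tip, pass to conformal half-strip coordinates, and write the map explicitly on each side of $\partial\mathbb D$. Your bookkeeping is correct: the collar moduli give $\dist(\Phi(z),P_0)\asymp\bigl(\log\frac1{|z-p|}\bigr)^{-1/s}$ for the Riemann map, the interior compression $(u,v)\mapsto(\frac s2u^2,v)$ and the matching exterior map each produce $K_f\asymp s\log\frac1{|z-p|}$ (hence the threshold $2/s$), while forcing conformality inside loads the whole logarithmic compression onto the exterior and doubles the constant to $K_f\asymp 2s\log\frac1{|z-p|}$ (hence $1/s$). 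The caveat you yourself raise is real: to get the thresholds exactly, the relation $u=\log\frac1{|z-p|}+O(1)$ and the asymptotics of the cusp's strip coordinate must be controlled to first order (Warschawski-type estimates), not just up to multiplicative constants; this is where the cited papers spend their effort, and historically the sharp constant required the follow-up note \cite{kt10}.

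The genuine soft spot is in your non-realizability argument. You apply the modulus inequality to the crossing family $\Gamma_r$ of the collar $\{r<x_1<2r\}\cap\Omega_s$ and assert that ``its $f$-preimage is a family joining two cross-cuts of $\mathbb D$ across a round dyadic annulus about $p$.'' For a general homeomorphism of finite distortion there is no a priori control on the shape of $f^{-1}$ of the collar, so you cannot place the preimage family in a round annulus, and the weighted mean of $K_f$ you extract would then be over an unknown set, which breaks the summation over dyadic scales that produces the exponent $\lambda s$. The argument has to be turned around: one fixes a curve family in the domain whose geometry is known (round annular sectors about $p$ in $\R^2$, or the segments $I_x$ crossing the cusp, as in \cite{gkt13}), estimates the modulus of its \emph{image} from above using only the topology of the boundary correspondence and the geometry of $\Omega_s$, and then applies the $K_f$-weighted modulus inequality with an explicit admissible $\rho\asymp\bigl(|x-p|\log\frac1{|x-p|}\bigr)^{-1}$ together with the elementary inequality $ab\le a\log a+e^{b}$ to trade the $L^1$ bound on $\exp(\lambda K_f)$ against the modulus lower bound. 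Your Jensen-type step is the right idea, but as written it is attached to the wrong curve family; until it is re-anchored to a family whose admissible metric lives where the geometry is explicit, the converse direction is not established. The threshold cases $\lambda=2/s$ and $\lambda=1/s$ are indeed outside the scope of both the statement and your plan.
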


Notice the difference to the setting of quasiconformal mappings: instead of the switch from $K$ to $K^2$ under the additional
conformality condition, one essentially switches from $\lambda$ to $\lambda/2$. This type of conformality behavior disappears when we consider the inward-pointing cusps.

\begin{theorem}[\cite{gkt13}]\label{thm:inner cusp domain}
Given $\lambda<\frac{2}{s}$, there is a homeomorphism $f_s\colon\bR^2\to\bR^2$ of locally $\lambda$-exponentially
integrable distortion so that
$$f_s(\mathbb D)=
  \Delta_s:=  B(x'_s,r'_s)\setminus\{(x_1,x_2)\in\bR^2:x_1\ge 0,|x_2|\le
x_1^{1+s}\}  ,$$
where $x'_s=(-s,0)$ and $r'_s=\sqrt{(s+1)^2+1}.$ On the other hand, there is
no homeomorphism $g\colon\bR^2\to\bR^2$ of locally exponentially integrable distortion such
that $g$ is quasiconformal in $\mathbb D$ and $g(\mathbb D)= \Delta_s$.
\end{theorem}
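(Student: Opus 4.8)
The plan is to handle the two assertions of Theorem~\ref{thm:inner cusp domain} separately: first the \emph{construction} of a homeomorphism of locally $\lambda$-exponentially integrable distortion with $f_s(\mathbb D)=\Delta_s$ for every $\lambda<2/s$, and then the \emph{obstruction}, namely that no such map can be quasiconformal on $\mathbb D$.

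For the construction, I would work in polar-type coordinates adapted to the inward cusp at the origin and build $f_s$ as a composition of two pieces: a radial stretching that opens the cusp, followed by a bi-Lipschitz correction that matches the map to a conformal (in fact identity-like) map on the complement of a neighbourhood of the cusp tip, so that the whole thing extends to a homeomorphism of $\bR^2$ onto itself with $f_s(\mathbb D)=\Delta_s$. The model map near the tip should send an angular sector of the disk of opening $\sim\rho$ at radius $\rho$ (after a conformal uniformization of $\mathbb D$ near a boundary point) to the slit complement $\{|x_2|>x_1^{1+s}\}$, which near the origin looks like a cusp of exponent $1+s$; the required radial profile is then of the form $r\mapsto r$ composed with an angular dilation of order $\rho^{-s}$, and a direct computation of $Df_s$ shows $K_{f_s}(x)\lesssim \log(1/|x|)$ near the tip. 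Since $\exp(\lambda K_{f_s})\lesssim |x|^{-C\lambda}$ there, local integrability in the plane holds exactly when $C\lambda<2$, i.e.\ for all $\lambda<2/s$ with the sharp constant $C=s$; away from the tip the distortion is bounded, so the exponential integrability is automatic. I would quote \cite{gkt13} for the precise form of the radial model and only indicate the estimate on $K_{f_s}$.

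For the obstruction, suppose toward a contradiction that $g\colon\bR^2\to\bR^2$ has locally exponentially integrable distortion, is quasiconformal in $\mathbb D$, and $g(\mathbb D)=\Delta_s$. The idea is modulus of curve families: the family $\Gamma$ of curves inside $\Delta_s$ joining the two ``lips'' of the inward cusp near the tip has modulus decaying like a power of the scale, in fact $\modulus(\Gamma_t)\to 0$ polynomially because the cusp is thin of exponent $1+s>1$; on the other hand, its preimage under $g$ lives in $\mathbb D$ near a boundary point, where $g$ is $K$-quasiconformal, so $\modulus$ is quasi-preserved, and the preimage family joins two boundary arcs separated by a \emph{nondegenerate} crosscut — forcing a lower bound on $\modulus(g^{-1}\Gamma_t)$ that does not go to zero at the right rate. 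Quantitatively, one uses that a $K$-quasiconformal map on $\mathbb D$ is H\"older continuous up to the boundary with exponent $1/K$, so images of boundary arcs cannot be compressed faster than a fixed power; comparing this with the exponent $1+s$ of the cusp gives the contradiction. The local exponential integrability of the distortion of $g$ on the rest of the plane is only needed to ensure $g$ is a genuine homeomorphism with the usual $ACL$/modulus machinery available on the cusp side; the real tension is \textbf{$K$-quasiconformal = power modulus bounds versus cusp = faster-than-power decay}.

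The main obstacle is the second part: one has to set up the modulus (or extremal length) estimate on the inward cusp carefully, since the relevant curve family degenerates at the tip and the standard quasiconformal distortion-of-modulus inequality must be combined with boundary H\"older continuity and with the fact that the distortion of $g$ is merely exponentially integrable (not bounded) on the $\mathbb D$-side boundary. I would isolate this as a lemma: if $h\colon\mathbb D\to D$ is quasiconformal and extends to a homeomorphism of closures, then $\partial D$ cannot contain an inward cusp of exponent $>1$; the proof is the modulus comparison sketched above. Granting that lemma, Theorem~\ref{thm:inner cusp domain} follows by applying it to $g|_{\mathbb D}$, since $\partial\Delta_s$ has an inward cusp of exponent $1+s>1$ at the origin, while the construction above gives the positive direction.
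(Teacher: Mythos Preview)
First, note that the paper does not itself prove Theorem~\ref{thm:inner cusp domain}; it is quoted from \cite{gkt13}, with only the sharper statement $K_g\notin L^p_{\loc}(\bR^2)$ for $p>K/s$ recorded afterwards, and with the mechanism partly visible in Lemma~\ref{lem4.4} and Remark~\ref{rmk4.7}. So there is no in-paper proof to compare against line by line, but your obstruction argument has a genuine gap that must be flagged.

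Your isolated lemma --- that a quasiconformal $h\colon\mathbb D\to D$ extending to a homeomorphism of closures forbids $\partial D$ from having an inward polynomial cusp --- is simply false: the Riemann map $\mathbb D\to\Delta_s$ is conformal, extends homeomorphically to $\overline{\mathbb D}\to\overline{\Delta_s}$ by Carath\'eodory, and $\partial\Delta_s$ has an inward cusp. More structurally, your modulus comparison lives entirely inside $\Delta_s$ and hence inside $\mathbb D$; since conformal maps preserve modulus \emph{exactly}, no contradiction can possibly be extracted from a curve family contained in $\Delta_s$. The sentence ``the local exponential integrability of the distortion of $g$ on the rest of the plane is only needed to ensure $g$ is a genuine homeomorphism with the usual ACL/modulus machinery'' is therefore wrong: exponential integrability of $K_g$ in $\bR^2\setminus\mathbb D$ is not a technicality, it is precisely the hypothesis that gets violated.

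The argument in \cite{gkt13} works in the complement. For small $x>0$ the vertical segment $I_x=\{x\}\times[-x^{1+s},x^{1+s}]$ lies in the cusp region $\bR^2\setminus\Delta_s$, so $g^{-1}(I_x)\subset\bR^2\setminus\mathbb D$. Its endpoints sit on the two lips of $\partial\Delta_s$, and the $K$-quasiconformality of $g$ on $\mathbb D$ (through a modulus estimate of the type in Lemma~\ref{lemma:modulus of continuity}, see \cite[Lemma~4.2]{gkt13} and Remark~\ref{rmk4.7} here) yields a \emph{lower} bound $\mathrm{osc}_{I_x}g^{-1}\gtrsim x^{K/(2-\varepsilon)}$ on the separation of their preimages on $\partial\mathbb D$. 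Since $|I_x|\approx x^{1+s}$, the map $g^{-1}$ must stretch $I_x$ by a factor blowing up polynomially as $x\to 0$; feeding this into ACL on segments plus Jensen and Fubini, exactly as in the proof of Lemma~\ref{lem4.4}, shows that $K_g\notin L^p_{\loc}(\bR^2)$ once $p>K/s$, hence in particular $\exp(\lambda K_g)\notin L^1_{\loc}$ for any $\lambda>0$. That is the correct ``tension'': quasiconformality \emph{inside} $\mathbb D$ forces large boundary separation, which then forces non-integrable distortion \emph{outside} $\mathbb D$.

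Your construction sketch for the positive direction is in the right spirit and you are right to defer the explicit model to \cite{gkt13}; just note that the singular point for $K_{f_s}$ is the preimage of the cusp tip on $\partial\mathbb D$, so the estimate should read $K_{f_s}(z)\lesssim \log\frac{1}{|z-z_0|}$ for some $z_0\in\partial\mathbb D$, not $\log\frac{1}{|z|}$.
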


In fact, if $g\colon\bR^2\to\bR^2$ is a homeomorphism of finite distortion $K_g(x)$ such that $g$ is
$K$-quasiconformal in $\mathbb D$ with $g(\mathbb D)=\Delta_s$, then it was proved in \cite{gkt13} that $K_g\notin L_{\loc}^p(\bR^2)$ if $p>K/s$.

Thus an inward pointing polynomial cusp rules out the extendability
of a Riemann mapping function to a homeomorphism of locally exponentially
integrable distortion, but such exterior cusps are  not that dangerous.

\subsection{Structure and notation}

We sometimes associate the plane $\bR^2$ with the complex plane $\bC$ for convenience and denote
by $\hat{\bC}$ the extended complex plane.
The closure of a set $U\subset\bR^2$ is denoted $\closure{U}$ and the boundary $\bdary{U}$.
The open disk of radius $r>0$ centered at $x\in\bR^2$ is denoted by $B(x,r)$ and we  simply write
$\mathbb D$ for the
unit disk.
The boundary of $B(x,r)$ will be denoted by $S(x,r)$ and the boundary of the unit disk $\mathbb D$
is written as $\partial \mathbb D.$
The symbol~$\Omega$ always refers to a domain, \ie a connected and open subset of~$\bR^2$.

When we write $f(x)\lesssim g(x)$, we mean that $f(x)\leq Cg(x)$ is 
satisfied for all $x$ with some fixed constant $C\geq 1$. Similarly, the 
expression $f(x)\gtrsim g(x)$ means that $f(x)\geq C^{-1}g(x)$ is satisfied 
for all $x$ with some fixed constant $C\geq 1$. We write $f(x)\approx g(x)$ 
whenever $f(x)\lesssim g(x)$ and $f(x)\gtrsim g(x)$.

This paper is structured as follows. 
In section \ref{sec:extension} we introduce a standard way to extend a conformal mapping to the whole plane.
Section \ref{sec generzlized qd} is about sufficient geometric conditions for generalized quasidisks.
Section \ref{con ext} is devoted to extension results of a particular class of quasiconformal mappings.

\section{Extension of a conformal welding}\label{sec:extension}
In this section, we briefly describe the standard way of extending a conformal map
$f\colon \mathbb D\to\Omega,$ where $\Omega$ is a Jordan domain, to a mapping
of the entire plane. 
First of all,
$f$ can be extended to a homeomorphism between $\closure{\mathbb D}$ and
$\closure{\Omega}$.
For simplicity, we denote this extended homeomorphism also by $f$.
It follows from the Riemann Mapping Theorem that there exists a
conformal mapping $g\colon \bR^2\setminus{\closure{\mathbb D}}\to\bR^2\setminus{\closure{\Omega}}$
such that the complement of the closed unit disk gets mapped to the complement
of $\overline \Omega$.
In this correspondence the boundary curve
$\Gamma=\bdary{\Omega}$ is mapped homeomorphically onto the boundary circle
$\partial \mathbb D$ and hence the composed mapping $G=g^{-1}\circ f$ is a
well-defined circle
homeomorphism, called \textit{conformal welding}.
Suppose we are able to extend $G$ to the exterior of the
unit disk, with the extension
still denoted by $G$. Then the mapping $G'=g\circ G$ will be well-defined
outside the unit disk and
it coincides with $f$ on the boundary circle $\partial \mathbb D$.
Finally, if we define
\begin{equation*}
  F(x)=
  \begin{cases}
    G'(x) & \text{if } |x| \geq 1 \\
    f(x) & \text{if } |x| \leq 1 ,
  \end{cases}
\end{equation*}
then we obtain an extension of $f$ to the entire plane.
In the case of a quasidisk, that is when  $\Omega$ is linearly locally
connected (LLC), the extension $G$ can be chosen to be quasiconformal
and hence the obtained map $F$ is also quasiconformal.

On the other hand, the extendability of a conformal mapping $f\colon \mathbb D\to \Omega$ to a homeomorphism $\hat f\colon \bR^2
\to \bR^2$ of locally integrable distortion is essentially equivalent to being able to extend the conformal welding $G'$ above to this class. Indeed, if $\hat f$ extends $f$, then $g^{-1}\circ \hat f$ extends $G$ to the exterior of $\mathbb D$ and has the same distortion as $\hat f.$ Reflecting (twice) with respect to the unit circle one then further obtains an extension to $\mathbb D\setminus \{0\}.$ Hence, one obtains an extension $\hat G'$ of $G'$ to $\bR^2\setminus \{0\}$ with distortion that has the same local  integrability degree as the distortion of $\hat f.$ If the latter distortion is sufficiently nice in a neighborhood of infinity (e.g. bounded), then this holds in all of $\bR^2$ as well.

Given a sense-preserving homeomorphism $f\colon \partial \mathbb D\to \partial \mathbb D$
and $0< t< \frac{\pi}{2}$, set
\begin{equation}
\delta_f(\theta,t)=\max \Big\{\frac{|f(e^{i(\theta+t)})-f(e^{i\theta})|}{|f(e^{i\theta})-f(e^{i(\theta-t)})|}, \frac{|f(e^{i(\theta-t)})-f(e^{i\theta})|}{|f(e^{i\theta})-f(e^{i(\theta+t)})|}\Big\}.
\end{equation}
Clearly $\delta_f$ is continuous in both variables, $\delta_f\geq 1$ and $\delta_f(\theta+2\pi,t)=\delta_f(\theta,t)$. The scalewise distortion of $f$ is defined as $\rho_f(t)=\sup_{\theta}\delta_f(\theta,t)$.

A well-known fact is that the extendability of a conformal welding $G\colon \bdary{\mathbb{D}}\to \bdary{\mathbb{D}}$ to a global homeomorphism of the entire plane with controlled distortion is related to the integrablity of $\rho_G$; see for instance~\cite[Section 4]{g13} for more information on this. For our purpose, we recall the following result, which is essentially due to Zakeri~\cite{z08}.
\begin{proposition}[\cite{z08,g13}]\label{coro:sufficient for extending}
Let $G\colon \bdary\mathbb{D}\to\bdary\mathbb{D}$ be a conformal welding. If 
\begin{equation*}
\rho_G(t)=O(\log\frac{1}{t}) \quad \text{as}\quad t\to 0,
\end{equation*}
then $G$ extends to a homeomorphism of the entire plane of locally exponentially integrable distortion. Furthermore, if 
\begin{equation*}
\rho_G(t)=O(t^{-\alpha}) \quad \text{as}\quad t\to 0
\end{equation*}
for some $\alpha>0$,
then $G$ extends to a homeomorphism of the entire plane of locally $p$-integrable distortion with any $p\in (0,\frac{1}{\alpha})$.
\end{proposition}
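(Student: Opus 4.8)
The plan is to extend $G$ to $\mathbb D$ by a Beurling--Ahlfors type averaging extension, to record that the pointwise distortion at a point $z$ of the resulting map is controlled \emph{linearly} by $\rho_G$ evaluated at the scale $1-|z|$, and then to obtain both integrability statements by summing a geometric series over dyadic annuli. The extension to the exterior of $\mathbb D$ is afterwards obtained by the conformal reflection $z\mapsto z/|z|^2$, adjusted to the identity outside a large disk by a bi-Lipschitz interpolation (bounded distortion, harmless), so everything reduces to the behaviour near $\partial\mathbb D$.

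For the construction, work in the universal cover: the conformal map $x+iy\mapsto e^{2\pi i(x+iy)}$ ($y>0$) sends the half-plane onto $\mathbb D\setminus\{0\}$ and intertwines $x\mapsto x+1$ with the identity. Lifting $G$ to a strictly increasing homeomorphism $h\colon\mathbb R\to\mathbb R$ with $h(x+1)=h(x)+1$, the Beurling--Ahlfors extension $F$ of $h$ — whose value at height $y>0$ above $x$ is assembled in the standard way from the one-sided averages $\tfrac1y\int_x^{x+y}h$ and $\tfrac1y\int_{x-y}^x h$ over intervals of length $y$ — descends to a homeomorphism of $\mathbb D$ extending $G|_{\partial\mathbb D}$. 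Since $h$ is strictly increasing, $F$ is sense-preserving, smooth with $J_F>0$ in the open half-plane, and a direct computation using the monotonicity and affine-periodicity of $h$ gives $\int|DF|<\infty$ over a period strip, so $F\in W^{1,1}_{\loc}(\bR^2)$ and $F$ is a homeomorphism of finite distortion. The one estimate needed is the (classical) fact that, writing $a=h(x+y)-h(x)$ and $b=h(x)-h(x-y)$, all four partial derivatives of $F$ at $(x,y)$ are $\lesssim\tfrac{a+b}{y}$ while $J_F(x,y)\gtrsim\rho_h(y)^{-1}\big(\tfrac{a+b}{y}\big)^2$, so that $K_F(x+iy)\lesssim\rho_h(y)$. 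Since the covering map is conformal, $1-|z|\approx y$ near $\partial\mathbb D$, and $\rho_h(y)$ stays bounded for $y$ bounded away from $0$, this transfers to
\begin{equation*}
K_F(z)\le C\,\rho_G\big(C(1-|z|)\big)\qquad\text{for $z$ near $\partial\mathbb D$}
\end{equation*}
with $C$ a universal constant (and $K_F\lesssim1$ elsewhere in $\mathbb D$), and likewise on the exterior side after the reflection.

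It remains to integrate. Writing $A_n=\{1-2^{-n}<|z|<1-2^{-n-1}\}$, one has $|A_n|\approx 2^{-n}$ and $1-|z|\approx 2^{-n}$ on $A_n$. If $\rho_G(t)=O(\log\tfrac1t)$ then $K_F\le C'n$ on $A_n$, so $\int\exp(\lambda K_F)\lesssim\sum_n 2^{-n}e^{\lambda C'n}<\infty$ whenever $\lambda<(\log 2)/C'$; together with the bounded contribution of the region away from $\partial\mathbb D$ (on both sides) this gives $\exp(\lambda K_F)\in L^1_{\loc}(\bR^2)$ for small $\lambda>0$, i.e.\ $F$ has locally exponentially integrable distortion. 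If instead $\rho_G(t)=O(t^{-\alpha})$ then $K_F\lesssim 2^{n\alpha}$ on $A_n$, so $\int_{A_n}K_F^p\lesssim 2^{-n(1-\alpha p)}$, summable precisely when $p<1/\alpha$; hence $K_F\in L^p_{\loc}(\bR^2)$ for every $p\in(0,1/\alpha)$.

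The step I expect to be the main obstacle is the pointwise bound $K_F(z)\lesssim\rho_G(1-|z|)$ with a \emph{linear} dependence on $\rho_G$: this is precisely what the first case requires, since even a quadratic bound $K_F\lesssim\big(\log\tfrac1{1-|z|}\big)^2$ would already force $\exp(\lambda K_F)\notin L^1_{\loc}$ for every $\lambda>0$. Achieving linearity is the heart of the matter: a naive radius-preserving extension fails, and one needs a genuine averaging extension — designed, if necessary, so that the dilatation at height $y$ is controlled by the first power, not the square, of the quasisymmetric ratio of $h$ at that scale — together with a careful accounting of the four partial derivatives that uses the monotonicity of $h$ to keep the Jacobian from degenerating too fast. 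The remaining points, that $F$ is a homeomorphism with $J_F>0$ a.e.\ and that $F\in W^{1,1}_{\loc}$, are standard for this extension.
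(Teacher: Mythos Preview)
The paper does not supply a proof of this proposition: it is quoted as a known result from \cite{z08,g13} (``essentially due to Zakeri''), so there is nothing in the present paper to compare against line by line. That said, your outline is exactly the argument carried out in \cite{z08}: lift to $\mathbb H$, apply the Beurling--Ahlfors averaging extension, use the pointwise estimate $K_F(x+iy)\lesssim \delta_h(x,y)\le \rho_h(y)$, descend to the disk and reflect, then integrate over dyadic annuli. Your computations in the two integrability cases are correct and give the stated thresholds $\lambda>0$ small, respectively $p<1/\alpha$.

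You are right to single out the \emph{linear} bound $K_F(x,y)\lesssim\rho_h(y)$ as the only nontrivial step; this is precisely what Zakeri isolates. It follows from the classical Beurling--Ahlfors computation: with $a=h(x+y)-h(x)$ and $b=h(x)-h(x-y)$ one has $|DF|\lesssim (a+b)/y$ and $J_F\gtrsim ab/y^2$, hence $K_F\lesssim (a+b)^2/(ab)\lesssim \max(a/b,b/a)$, which is linear in the quasisymmetry ratio at scale $y$. The remaining issues you flag---that the periodic extension descends through $e^{2\pi i z}$ to a self-map of $\mathbb D\setminus\{0\}$ which extends across $0$, and that the reflected map can be made to agree with the identity near $\infty$ by a bounded-distortion interpolation---are routine. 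In short: correct, and the same route as the cited source.
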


\section{Geometric criteria for generalized quasidisks}\label{sec generzlized qd}

In this section, we review known geometric criteria for a Jordan domain $\Omega$ to be a generalized quasidisk and present some improvement via basically the same techniques.

\subsection{Relaxing the three point property and linear local connectivity}
As observed from Theorems \ref{thm:outer cusp domain} and \ref{thm:inner cusp domain}, we have to relax the Ahlfors three point property or linear local connectivity  to include cusp domains. The extensions for these two concepts are straightforward.

\begin{definition}[Generalized three point property]\label{def:generalized three point property}
We say that a Jordan domain $\Omega\subset\bR^2$ satisfies the three point property with a control function $\psi$ if there exists a constant $C\geq 1$ and an increasing function $\psi\colon [0,\infty)\to [0,\infty)$ such that for each pair of distinct points $P_1,P_2\in\bdary\Omega$, 
\begin{equation}\label{eq:three point property with control function}
\min_{i=1,2}\diam(\gamma_i)\leq \psi\Big(C|P_1-P_2|\Big),
\end{equation}
where $\gamma_1,\gamma_2$ are the components of $\bdary\Omega\backslash\{P_1,P_2\}$. 
\end{definition}

\begin{definition}[Generalized local connectivity]\label{def:generalized local connectivity}
A domain $\Omega\subset \R^2$ is called $(\varphi,\psi)$-locally connected ($(\varphi,\psi)$-LC) if
\begin{itemize}
\item ($\varphi$-LC-1) each pair of points in $B(x,r)\cap\Omega$ can be joined by an arc in
$B(x,\varphi(r))\cap\Omega$, and
\item ($\psi$-LC-2) each pair of points in $\Omega\backslash B(x,r)$ can be joined by an arc in
$\Omega\setminus B(x,\psi(r))$,
\end{itemize}
where $\varphi, \psi:[0, \infty)\to[0, \infty)$ are smooth increasing functions such that
$\varphi(0)=\psi(0)=0$, $\varphi(r)\geq r$ and $\psi(r)\leq r$ for all $r>0$.
\end{definition}

For technical reasons, we assume that the function $t\mapsto \frac t {(\varphi^{-1}\circ \psi(t))^2}$ is decreasing
and that there exist constants $C_1,C_2$ so that $C_1\varphi(t)\le \varphi(2t)\le C_2\varphi(t)$
and $C_1\psi(t)\le \psi(2t)\le C_2\psi(t)$ for all $t>0.$
If $\varphi^{-1}=\psi$ above, $\Omega$ will simply be called $\psi$-LC. By \cite[Lemma 3.1]{g13}, a Jordan domain $\Omega\subset \R^2$ has the three point property with control function $\psi$ if and only if $\Omega$ is $\psi^{-1}$-locally connected.

Using the generalized three point property, the following result was proved in \cite{g13}.
\begin{theorem}[\cite{g13}]\label{thm:generalized quasidisk via genearlized three point property}
If a Jordan domain $\Omega\subset\bR^2$ has the three point property with the control function $\psi(t)=Ct\log^{s}\log(e+\frac{1}{t})$ for some positive constant $C$ and $s\in (0,\frac{1}{2})$, then $\Omega$ is a generalized quasidisk.
\end{theorem}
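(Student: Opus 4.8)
The strategy is to reduce the problem to an estimate on the scalewise distortion $\rho_G$ of the conformal welding and then invoke Proposition~\ref{coro:sufficient for extending}. By the discussion in Section~\ref{sec:extension}, it suffices to extend the conformal welding $G=g^{-1}\circ f$ to a homeomorphism of the plane with locally exponentially integrable distortion; since the distortion behaviour near infinity is harmless, the only real content is the behaviour of $G$ near the unit circle. So I would aim to prove that, under the hypothesis that $\Omega$ has the three point property with control function $\psi(t)=Ct\log^{s}\log(e+\tfrac1t)$, one has $\rho_G(t)=O(\log\tfrac1t)$ as $t\to 0$, which is exactly the hypothesis needed in the first half of Proposition~\ref{coro:sufficient for extending}.

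The bulk of the work is the geometric translation. First I would recall (as noted after Definition~\ref{def:generalized local connectivity}, via \cite[Lemma 3.1]{g13}) that the three point property with control function $\psi$ is equivalent to $\Omega$ being $\psi^{-1}$-locally connected, so I may freely use the $(\varphi,\psi)$-LC formulation with $\varphi=\psi^{-1}$. Next, to control $\rho_G(t)=\sup_\theta \delta_G(\theta,t)$, I would fix $\theta$ and three consecutive boundary points $a=f(e^{i(\theta-t)})$, $b=f(e^{i\theta})$, $c=f(e^{i(\theta+t)})$ on $\Gamma=\partial\Omega$, and estimate the ratio $|b-c|/|a-b|$. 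The key classical input is distortion theory for the conformal maps $f$ and $g$ (Koebe-type estimates, and the fact that for a Jordan domain the harmonic measure / conformal geometry can be read off from the LLC-type bounds): the three point property controls, for the arc of $\partial\Omega$ between two boundary points, its diameter in terms of the distance between the points, which in turn controls the distortion of the welding through the standard comparison between arclength on $\partial\mathbb D$ (i.e. the parameter $t$) and the Euclidean geometry of the image arcs under $f$ and $g^{-1}$. Feeding in $\psi(t)=Ct\log^{s}\log(e+\tfrac1t)$ and the doubling-type assumptions on $\varphi,\psi$, the resulting bound on $\delta_G(\theta,t)$ should come out as $O\big(\log\tfrac1t\big)$; here the precise exponent $s\in(0,\tfrac12)$ is what guarantees we land in the logarithmic regime rather than a polynomial one. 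This is essentially the argument used in \cite{g13} for related statements, so I would cite that work for the technical comparison lemmas rather than redo them.

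Once $\rho_G(t)=O(\log\tfrac1t)$ is established, Proposition~\ref{coro:sufficient for extending} immediately yields an extension of $G$ to a homeomorphism of $\bR^2$ of locally exponentially integrable distortion. Composing with the conformal map $g$ outside $\mathbb D$ (which does not affect local integrability of the distortion, being conformal) and with $f$ inside, as in the construction recalled in Section~\ref{sec:extension}, we obtain a homeomorphism $F\colon\bR^2\to\bR^2$ that is conformal in $\mathbb D$, satisfies $F(\mathbb D)=\Omega$, and has locally exponentially integrable distortion in the whole plane. By Definition~\ref{def:generalized quasidisk} this means $\Omega$ is a generalized quasidisk, completing the proof.

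\textbf{Main obstacle.} The delicate point is the geometric distortion estimate $\delta_G(\theta,t)\lesssim \log\tfrac1t$: one must pass from the \emph{diameter} control given by the three point property to a \emph{two-sided ratio} control on consecutive welding increments, uniformly in $\theta$, and this requires combining the three point property with lower bounds coming from conformal distortion theory (to rule out the arcs being too small) and carefully tracking how the control function $\psi(t)=Ct\log^s\log(e+\tfrac1t)$ propagates through the Koebe estimates — it is exactly here that the restriction $s<\tfrac12$ is consumed.
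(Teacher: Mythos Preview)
Your overall strategy --- reduce to the estimate $\rho_G(t)=O(\log\tfrac1t)$ for the welding and then invoke Proposition~\ref{coro:sufficient for extending} --- is exactly the paper's approach, and your identification of where the constraint $s<\tfrac12$ is consumed is correct. However, the technical mechanism you name is not the one that actually does the work. You propose to control $\delta_G(\theta,t)$ via ``Koebe-type estimates'' and ``distortion theory for the conformal maps $f$ and $g$'', but Koebe distortion is an interior statement and gives no direct grip on how boundary arcs compare under the \emph{exterior} conformal map $g^{-1}$; there is no clear way to convert the three point property into a two-sided ratio bound on welding increments via Koebe alone, and this is precisely the ``main obstacle'' you flag but leave unresolved.

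The paper (see the proof of Theorem~\ref{thm:general via nonlinear local connectivity}, of which the present statement is the special case $\varphi^{-1}=\psi$) uses instead the conformal invariance of the \emph{modulus of curve families}. One estimates $\modulus(\Gamma')$, where $\Gamma'$ joins two complementary boundary arcs $\alpha_1,\alpha_2$ in $\bR^2\setminus\overline\Omega$, in two ways: on the $g^{-1}$ side, Lemma~\ref{lemma:modululower} gives $\log(1+\delta_G(\theta,t))\lesssim \modulus(\Gamma')$; on the $\Omega$ side, the three point property bounds $\dist(\alpha_1,\alpha_2)$ against $\diam\alpha_1$ and yields $\modulus(\Gamma')\lesssim\big(\log(1+\varphi^{-1}\circ\psi(\diam\alpha_1)/\diam\alpha_1)\big)^{-1}$. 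Combining gives the \emph{exponential} bound $\delta_G(\theta,t)\le \exp\big(C\diam\alpha_1/\varphi^{-1}\circ\psi(\diam\alpha_1)\big)$, and a lower bound $\diam\alpha_1\gtrsim t^{2\alpha}$ from Lemma~\ref{lemma:modulus of continuity} closes the argument. For $\psi(t)=Ct\log^s\log(e+\tfrac1t)$ the exponent becomes $\approx(\log\log\tfrac1t)^{2s}$, which is $O(\log\log\tfrac1t)$ precisely when $s<\tfrac12$, whence $\rho_G(t)=O(\log\tfrac1t)$. In short: replace ``Koebe estimates'' by ``modulus of curve families via Lemmas~\ref{lemma:modululower} and~\ref{lemma:modulus of continuity}'' and your plan becomes the paper's proof.
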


\begin{remark}\label{rmk:on PAMS2015 error}
In \cite[Theorem 1.1]{g13}, it was stated Theorem \ref{thm:generalized quasidisk via genearlized three point property} holds for $\psi(t)=\log^{\frac{1}{2}}(\frac{1}{t})$ and this is not correct according to the proofs given there. We shall present a more general result with corrected proofs.
\end{remark}

Via the nonlinear local connectivity, the following result was proved in \cite{gkt13}.

\begin{theorem}[\cite{gkt13}]\label{thm:generalized quasidisk via local connectivity}
Let $\Omega\subset\bR^2$ be a $(\varphi,\psi)$-locally
connected Jordan domain with
\begin{equation}\label{eq:sufficient condition}
    \lim_{r\to 0}\frac{r\cdot\varphi^{-1}\circ \psi(r)}{(\varphi^{-1}\circ\varphi^{-1}\circ\psi(r))^2\cdot \log\log\frac{1}{r}}=0,
\end{equation}
where $\varphi,\psi$ satisfy the technical conditions above. Then any conformal mapping $f\colon \mathbb D\to\Omega$  can be extended to the entire plane as a homeomorphism of locally exponentially integrable distortion.
\end{theorem}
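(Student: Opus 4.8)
\emph{Sketch of the argument.} The plan is to run the extension machinery of Section~\ref{sec:extension} and reduce the whole statement to a single estimate on a conformal welding. Fix a conformal $f\colon\mathbb D\to\Omega$, put $\Omega^\ast:=\bR^2\setminus\closure{\Omega}$, let $g\colon\bR^2\setminus\closure{\mathbb D}\to\Omega^\ast$ be the conformal map of the exteriors normalized by $g(\infty)=\infty$ (possible since $\Omega$ is bounded), and set $G=g^{-1}\circ f$. By the discussion in Section~\ref{sec:extension} it is enough to extend $G$ to the exterior of $\mathbb D$ with locally exponentially integrable distortion, for then $g\circ G$ glues with $f$ to an extension $F$ of $f$ to the plane in the same local distortion class (the distortion of $F$ is controlled by that of the extended $G$ and is bounded away from $\bdary{\mathbb D}$ since $g$ is conformal there). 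Hence, by Proposition~\ref{coro:sufficient for extending}, it suffices to prove that the scalewise distortion of the welding satisfies
\begin{equation}\label{eq:plan-goal}
\rho_G(t)=O\Big(\log\tfrac1t\Big)\qquad\text{as }t\to0.
\end{equation}

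To prove \eqref{eq:plan-goal}, fix $\theta$ and a small $t$, put $w_0=e^{i\theta}$ and $w_\pm=e^{i(\theta\pm t)}$, and write $\alpha_\pm\subset\bdary\Omega$ for the boundary sub-arcs $f(\mathrm{arc}\,w_0w_\pm)$, which meet at $P_0=f(w_0)$. Conformal invariance of harmonic measure, applied to $f$ and then to $g$, gives on the one hand $\omega(f(0),\alpha_+,\Omega)=\omega(f(0),\alpha_-,\Omega)=t/2\pi$, and on the other hand $|G(w_\pm)-G(w_0)|\approx\omega(\infty,\alpha_\pm,\Omega^\ast)$, so that
\[
\delta_G(\theta,t)\ \approx\ \max\Big\{\frac{\omega(\infty,\alpha_+,\Omega^\ast)}{\omega(\infty,\alpha_-,\Omega^\ast)},\ \frac{\omega(\infty,\alpha_-,\Omega^\ast)}{\omega(\infty,\alpha_+,\Omega^\ast)}\Big\}.
\]
Thus \eqref{eq:plan-goal} amounts to saying that two adjacent boundary arcs carrying \emph{equal} interior harmonic measure from $f(0)$ have exterior harmonic measures from $\infty$ that differ by at most a factor $O(\log\frac1t)$. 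This is where the $(\varphi,\psi)$-local connectivity enters, and I would split the estimate into an interior part (distortion of $f$ near $\bdary\Omega$) and an exterior part (distortion of $g$ near $\bdary\Omega$).

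For the interior part one combines Koebe's distortion theorem and the Gehring--Hayman theorem to place $\alpha_+$ and $\alpha_-$ at the common Euclidean scale $r\approx\dist\big(f((1-t)w_0),\bdary\Omega\big)$, and then uses $\varphi$-LC-1 to bound below the modulus of a ring in $\Omega$ separating $\alpha_\pm$ from $f(0)$, obtaining $\diam\alpha_+\approx\diam\alpha_-$ with an error measured through $\varphi^{-1}$. For the exterior part one argues dually in $\Omega^\ast$: $\psi$-LC-2 controls the connectivity of $\Omega^\ast$, hence the modulus of the ring in $\Omega^\ast$ separating $\alpha_\pm$ from $\infty$, which by standard extremal-length/harmonic-measure estimates converts into two-sided bounds on $\omega(\infty,\alpha_\pm,\Omega^\ast)$ in terms of $\diam\alpha_\pm$, the scale $r$, and the compositions $\varphi^{-1}\circ\psi$ and $\varphi^{-1}\circ\varphi^{-1}\circ\psi$ occurring in \eqref{eq:sufficient condition}. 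The doubling hypotheses on $\varphi,\psi$ and the monotonicity of $t\mapsto t/(\varphi^{-1}\circ\psi(t))^2$ are precisely what allow one to telescope these ring estimates over all the dyadic scales between $t$ and $r$ without accumulating an uncontrolled constant. Assembling the interior and exterior bounds and invoking \eqref{eq:sufficient condition} --- which forces the resulting geometric factor to be absorbed by $\log\frac1t$ --- yields \eqref{eq:plan-goal} uniformly in $\theta$; at the scales and boundary points where $\Omega$ is non-degenerate, $\delta_G(\theta,t)$ is already bounded and nothing extra is needed.

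The hard part will be the exterior estimate together with the bookkeeping of scales. Since $\psi$-LC-2 controls $\Omega^\ast$ only at coarse scales, passing from it to a genuinely two-sided bound on $\omega(\infty,\alpha_\pm,\Omega^\ast)$ requires a careful chain of modulus comparisons, and one has to check that the number of scales involved and the growth permitted by \eqref{eq:sufficient condition} combine to yield precisely the logarithmic bound in \eqref{eq:plan-goal} rather than a power of a logarithm. This is also the step where the technical monotonicity and doubling assumptions are indispensable, and where one should be careful to avoid the kind of slip recorded in Remark~\ref{rmk:on PAMS2015 error}.
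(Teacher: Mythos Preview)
Your reduction to the scalewise-distortion bound $\rho_G(t)=O(\log\tfrac1t)$ via Proposition~\ref{coro:sufficient for extending} is exactly the framework of \cite{gkt13}, and it is reproduced in this paper in the proof of the sharper Theorem~\ref{thm:general via nonlinear local connectivity}. Where you diverge is in the mechanism for bounding $\delta_G(\theta,t)$: you recast it as a ratio of exterior harmonic measures and plan to control those via Koebe, Gehring--Hayman, and a dyadic telescoping over scales. The argument in \cite{gkt13} (and the version written out here for Theorem~\ref{thm:general via nonlinear local connectivity}) never leaves the modulus of curve families. One takes the two boundary subarcs $\alpha_1,\alpha_2$ on $\partial\Omega$, uses both LC hypotheses together to prove a single diameter--distance inequality $\diam\alpha_1\lesssim\psi^{-1}\circ\varphi\big(\dist(\alpha_1,\alpha_2)\big)$, feeds this into Lemma~\ref{lemma:modululower} applied in $\bR^2\setminus\closure{\Omega}$, and transports back to the exterior of $\mathbb D$ by conformal invariance of modulus to obtain $\delta_G(\theta,t)\le\exp\big(C\,\diam\alpha_1/\varphi^{-1}\!\circ\psi(\diam\alpha_1)\big)$. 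The only remaining step is to convert $\diam\alpha_1$ into a function of $t$, and this is done not by Koebe or Gehring--Hayman but by the H\"older estimate of Lemma~\ref{lemma:modulus of continuity} (from \cite{kot01}), which under $\varphi$-LC-1 gives $\diam\alpha_1\gtrsim\varphi^{-1}(t^2)$. No multi-scale bookkeeping enters.

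Your harmonic-measure route is in principle viable---harmonic measure and extremal length are interchangeable here---but as written there is a genuine gap: you never explain how the exact shape of \eqref{eq:sufficient condition}, with $(\varphi^{-1}\circ\varphi^{-1}\circ\psi(r))^2$ in the denominator, is supposed to fall out of your interior/exterior split and your telescoped ring estimates. In the modulus proof that structure is forced: the $\varphi^{-1}\circ\psi$ comes from the diameter--distance inequality, and the extra $\varphi^{-1}$ from Lemma~\ref{lemma:modulus of continuity}. You also write that ``$\psi$-LC-2 controls the connectivity of $\Omega^\ast$''; this needs an argument, since $\psi$-LC-2 is a hypothesis on $\Omega$, not on its complement---in \cite{gkt13} the exterior step goes through the boundary diameter--distance bound and Lemma~\ref{lemma:modululower}, not through any LC property of $\Omega^\ast$ itself. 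If you carry your plan through you will almost certainly end up reconstructing those two lemmas in harmonic-measure language; the modulus formulation is shorter and is the one actually used.
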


Theorem \ref{thm:generalized quasidisk via local connectivity} implies in particular that if a Jordan domain $\Omega\subset\bR^2$ is $\psi$-locally connected with $\psi^{-1}(t)=Ct\log^s\log(e+\frac{1}{t})$ for some positive constant $C$ and some $s\in (0,\frac{1}{4})$, then $\Omega$ is a generalized quasidisk. Note that the range for $s$ is weaker than the one obtained in Theorem \ref{thm:generalized quasidisk via genearlized three point property}.

If $\Omega$ does not contain inward-pointing cusps, then we have the following weaker extension result.
\begin{theorem}[\cite{g13}]\label{thm:p extension case}
Let $\Omega\subset\bR^2$ be a LLC-1 Jordan domain. Then any conformal mapping $f\colon \mathbb D\to\Omega$  can be extended to the entire plane as a homeomorphism of locally $p$-integrable distortion for some $p>0$.
\end{theorem}

As the following examples indicates, similar result fails if we only assume $\Omega$ has no outward-pointing cusps.
\begin{example}\label{exam:inner cusp for sharpness}
Given any $\varepsilon>0$, there exists a Jordan domain $\Omega\subset \R^2$, which is LLC-2 and $\varphi$-LC-1 with $\varphi^{-1}(t)=Ct\big(\log\frac{1}{t}\big)^{-(1+\varepsilon)}$, while it fails to be a generalized quasidisk.
\end{example}

\begin{proof}
The example can be taken of the form
\begin{equation}\label{eq:cuspdomain2}
  \Omega= B((-1,0),\sqrt{5})\setminus\{(x_1,x_2)\in\bR^2:x_1>0,|x_2|<x_1\big(\log{e/x_1}\big)^{-(1+\varepsilon)}\}.
\end{equation}
It follows from~\cite[Theorem 6.2]{gkt13} that $\Delta$ is not a generalized quasidisk.
\end{proof}

Note that there is an extra logarithm gap between Theorem \ref{thm:generalized quasidisk via local connectivity} and Example \ref{exam:inner cusp for sharpness}.

We remark here that the generalized three point property is ``symmetric" in the sense that both inward-pointing and outward-pointing cusps are simultaneously allowed to have the same degree. While in~\cite[Theorem 5.1 and Theorem 6.1]{gkt13}, the degree of an inward pointcusp and an outward point cusp plays a different role in the extension result. This phenomenon is natural since polynomial interior cusps rule out the possibility of a locally exponentially integrable distortion extension, but polynomial exterior cusps do not.

On the other hand, from the technical view of point, doing moduslus of curve family estimates does not distinguish different types of cusps. Thus one expect a similar kind of ``symmetric" formulation appears as in~\cite{g13}. This is indeed the case; see Theorem~\ref{thm:general via nonlinear local connectivity} below.



\subsection{A more general result via nonlinear local connectivity}

\begin{theorem}\label{thm:general via nonlinear local connectivity}
Let $\Omega\subset\bR^2$ be a $(\varphi,\psi)$-locally connected Jordan domain with
\begin{align}\label{eq:sufficient condition}
\lim_{r\to 0}\frac{r}{\varphi^{-1}\circ \psi(r)\log\log\frac{1}{r}}= 0.
\end{align}
Then $\Omega$ is a generalized quasidisk.
\end{theorem}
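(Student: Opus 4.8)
The plan is to follow the conformal-welding extension machinery described in Section~\ref{sec:extension}, and to reduce everything to a single estimate on the scalewise distortion $\rho_G(t)$ of the welding $G=g^{-1}\circ f$, after which Proposition~\ref{coro:sufficient for extending} finishes the job. Concretely, I would first recall that it suffices to extend the conformal welding $G\colon\partial\mathbb D\to\partial\mathbb D$ to a homeomorphism of the plane with locally exponentially integrable distortion; the extension of $f$ itself then follows by the gluing construction $F$ from Section~\ref{sec:extension}, using that the exterior map $g$ is conformal (hence contributes distortion $1$) and that near infinity the distortion is bounded. By Proposition~\ref{coro:sufficient for extending}, the desired conclusion holds as soon as we show
\begin{equation*}
\rho_G(t)=O\Big(\log\tfrac1t\Big)\quad\text{as }t\to0.
\end{equation*}
So the whole proof collapses to proving this growth bound on $\rho_G$ from the $(\varphi,\psi)$-local connectivity hypothesis \eqref{eq:sufficient condition}.

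The heart of the argument is therefore a modulus-of-curve-family estimate relating the geometry of $\Omega$ to the distortion of $G$. I would fix $\theta$ and a small $t$, look at the three boundary points $f(e^{i(\theta-t)}),f(e^{i\theta}),f(e^{i(\theta+t)})$ on $\partial\Omega$, and compare the two ratios defining $\delta_f(\theta,t)$. Using the conformality of $f$ on $\mathbb D$ and of $g$ on the exterior domain, a standard Koebe/distortion plus conformal-modulus argument (as in \cite{g13,gkt13}) bounds $\delta_G(\theta,t)$ in terms of the ratio of diameters of the two boundary subarcs cut off by $f(e^{i(\theta\pm t)})$ together with the distance between these points — that is, in terms of exactly the quantity controlled by the generalized three point property. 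Since by \cite[Lemma~3.1]{g13} the $(\varphi,\psi)$-LC condition (with the stated doubling and monotonicity hypotheses on $\varphi,\psi$) is equivalent to a three point property with a control function built from $\varphi^{-1}\circ\psi$, one gets, roughly,
\begin{equation*}
\delta_G(\theta,t)\lesssim \frac{\operatorname{something}(t)}{\varphi^{-1}\circ\psi(\operatorname{something}(t))},
\end{equation*}
and the hypothesis \eqref{eq:sufficient condition}, namely $r/(\varphi^{-1}\circ\psi(r)\log\log\frac1r)\to0$, is precisely what converts this into $\rho_G(t)=O(\log\frac1t)$ after taking the supremum over $\theta$ and using the doubling conditions to pass between scales comparable to $t$. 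I expect the $\log\log$ (rather than $\log$) in \eqref{eq:sufficient condition} to be exactly the slack needed because the modulus estimate produces a logarithm of the relevant ratio, and $\log(1/(\varphi^{-1}\circ\psi))$-type terms must be absorbed.

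The main obstacle is the curve-family/modulus comparison: making precise the passage from the abstract $(\varphi,\psi)$-LC data to a pointwise bound on $\delta_G(\theta,t)$ requires carefully choosing annuli or quadrilaterals in $\Omega$ whose conformal modulus can be estimated both from the $\varphi$-LC-1 side (points in a small ball are joinable inside a controlled larger ball) and the $\psi$-LC-2 side (the complementary statement), and then transporting these bounds through the conformal maps $f$ and $g$; the doubling/monotonicity technical conditions on $\varphi,\psi$ are what keep the constants uniform in $\theta$ and $t$. Once that estimate is in hand, the verification that \eqref{eq:sufficient condition} forces $\rho_G(t)=O(\log\frac1t)$ is a short monotonicity computation, and the extension then follows verbatim from Proposition~\ref{coro:sufficient for extending} together with the reflection argument of Section~\ref{sec:extension}. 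I would also remark at the end that, since the estimate does not see the \emph{sign} of the cusp, this recovers the "symmetric" behavior advertised before the statement, and that specializing to $\varphi^{-1}(t)=\psi(t)=Ct\log^s\log(e+\tfrac1t)$ one recovers the $s<\tfrac12$ regime, consistent with Theorem~\ref{thm:generalized quasidisk via genearlized three point property}.
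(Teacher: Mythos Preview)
Your overall strategy---reduce to bounding the scalewise distortion $\rho_G$ of the welding and then invoke Proposition~\ref{coro:sufficient for extending}---is the same as the paper's. However, two concrete ingredients are missing, and without them the argument does not close. First, the form of your bound on $\delta_G(\theta,t)$ is wrong: the modulus comparison (Lemma~\ref{lemma:modululower} applied in the exterior of $\Omega$ and in the exterior of $\mathbb D$, together with conformal invariance of $g$) yields an inequality for $\log(1+\delta_G)$, not for $\delta_G$ itself. After exponentiating one gets
\[
\delta_G(\theta,t)\le \exp\!\Big(\frac{C\,\diam\alpha_1}{\varphi^{-1}\circ\psi(\diam\alpha_1)}\Big)+C_1,
\]
where $\alpha_1$ is the image arc $f(\{e^{is}:\theta\le s\le\theta+t\})$. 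This exponential is exactly why condition~\eqref{eq:sufficient condition} carries a $\log\log(1/r)$ rather than a $\log(1/r)$: one logarithm is the target growth $\rho_G(t)=O(\log\tfrac1t)$, and a second is needed to undo the exponential. Your heuristic ``the modulus estimate produces a logarithm of the relevant ratio'' does not account for this, and with the linear bound you wrote one would only recover a single-$\log$ hypothesis.

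Second, and more seriously, you never explain how to pass from the \emph{image} scale $r=\diam\alpha_1$ back to the \emph{circle} scale $t$; these are not comparable, and the doubling hypotheses on $\varphi,\psi$ alone do not bridge them. The paper supplies this via Lemma~\ref{lemma:modulus of continuity} (H\"older continuity of $f^{-1}$ on a $\varphi$-LC-1 domain, from \cite{kot01}) together with the technical consequence $\varphi^{-1}(t)\ge Ct^{\alpha}$, obtaining $\diam\alpha_1\ge C\varphi^{-1}(t^2)\ge Ct^{2\alpha}$. Only then does the assumed monotonicity of $r\mapsto r/(\varphi^{-1}\circ\psi(r))$ combine with~\eqref{eq:sufficient condition} to give $\rho_G(t)=O(\log\tfrac1t)$. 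Your sketch replaces this step with ``using the doubling conditions to pass between scales comparable to $t$,'' which is not what happens and would not suffice.
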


A special case of Theorem \ref{thm:general via nonlinear local connectivity} can be formulated as follows: Let $\Omega\subset\bR^2$ be a Jordan domain, which is LLC-2 and $\varphi$-LC-1 with $\varphi^{-1}(t)=Ct\big(\log\log (1+\frac{1}{t})\big)^{-1+\varepsilon}$ for any $\varepsilon>0$. Then $\Omega$ is a generalized quasidisk. This is not sharp in view of Example \ref{exam:inner cusp for sharpness}.

For the proof of Theorem \ref{thm:general via nonlinear local connectivity}, we need the concept of the modulus of a curve family.
Recall that a Borel function $\rho\colon\bR^2\to[0,\infty\mathclose]$ is said to be admissible for a curve family $\Gamma$ if
$\int_\gamma\rho\,ds\geq1$ for each locally rectifiable $\gamma\in\Gamma$.
The modulus of the curve family $\Gamma$ is then
$$
  \modulus(\Gamma):=
\inf\Big\{\int_{\Omega}\rho^2(x)\,dx :  \rho
\text{ is admissible for }  \Gamma\}.
$$
For subsets $E$ and $F$ of $\closure{\Omega}$ we write $\Gamma(E,F,\Omega)$ for
the curve family consisting of all locally rectifiable paths joining~$E$ to~$F$
in~$\Omega$ and abbreviate $\modulus(\Gamma(E,F,\Omega))$ to $\modulus(E,F,
\Omega).$

We need two more lemmas for the proof of Theorem \ref{thm:general via nonlinear local connectivity}. The first one gives estimates of modulus of curve families inside a ball.
\begin{lemma}[\cite{v88}]\label{lemma:modululower}
Let $E, F$ be disjoint nondegenerate continua in $B(x,R).$ 
Then
\begin{equation}
C_0\frac{1}{\log(1+t)}\leq \modulus(E,F,B(x,R))\leq \tau(t)\footnote{The upper bound was incorrectly cited in \cite[Lemma 3.3]{g13} and thus it leads to the incorrectly stated Theorem 1.1},
\end{equation}
where $t=\frac{\dist(E,F)}{\min\{\diam E, \diam F\}}$, $\tau(t)\approx \log(1+\frac{1}{t})$ as $t\to 0$ and $C_0$ is an absolute constant.
\end{lemma}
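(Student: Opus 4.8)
\textbf{Plan for proving Lemma~\ref{lemma:modululower}.}
The statement is a classical two-sided estimate for the modulus of the family of curves joining two disjoint nondegenerate continua $E,F$ inside a fixed ball $B(x,R)$, expressed through the relative distance parameter $t=\dist(E,F)/\min\{\diam E,\diam F\}$. The proof naturally splits into the lower bound $C_0(\log(1+t))^{-1}\le \modulus(E,F,B(x,R))$ and the upper bound $\modulus(E,F,B(x,R))\le\tau(t)$, and each uses a different standard technique.

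For the \emph{lower bound} the plan is to produce an admissible function and estimate its Dirichlet energy directly. After rescaling so that $\min\{\diam E,\diam F\}=1$, both continua have diameter at least $1$, so any curve $\gamma$ joining $E$ to $F$ has length at least $\dist(E,F)$ but is confined to a region of diameter controlled by $R$. I would take the radial-logarithmic admissible density $\rho(y)=c\,|y-z_0|^{-1}$ adapted to a suitable center $z_0$, or more robustly use the standard lower bound for the modulus of the ring separating the two continua: since $E$ and $F$ each have diameter comparable to their separation at the relevant scale, the configuration contains a round annulus of bounded modulus, and monotonicity of modulus under inclusion of curve families gives $\modulus\gtrsim(\log(1+t))^{-1}$. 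The constant $C_0$ is then absolute because, after the scaling normalization, it depends only on the dimensionless ratio $t$ and not on $R$ or the position of the ball.

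For the \emph{upper bound} the plan is the dual approach: exhibit a single competitor density $\rho_0$ and bound $\modulus$ from above by $\int\rho_0^2$. Because $E$ is a continuum of diameter at least $\min\{\diam E,\diam F\}$ sitting at distance $\dist(E,F)$ from $F$, every joining curve must cross the spherical shells centered near $E$ between radius $\sim\dist(E,F)$ and radius $\sim\diam E$; choosing $\rho_0(y)=\big(|y-a|\log(1/t)\big)^{-1}$ on this shell (with $a\in E$) makes $\int_\gamma\rho_0\,ds\ge1$ for every such $\gamma$, and a direct polar-coordinate integration yields $\int\rho_0^2\,dy\lesssim\log(1+\tfrac1t)$, giving the asymptotic $\tau(t)\approx\log(1+\tfrac1t)$ as $t\to0$. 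This is precisely the Teichm\"uller/Gr\"otzsch-type estimate, and the cleanest route is to reduce to those extremal ring domains by symmetrization, so that the modulus is compared with the explicitly computable modulus of the Teichm\"uller ring whose asymptotics are known.

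The \textbf{main obstacle} I anticipate is getting the upper bound uniform and with the correct asymptotic constant as $t\to0$: one must ensure that restricting curves to $B(x,R)$ rather than all of $\bR^2$ does not destroy the estimate, and that the continua—which may be wild, only assumed nondegenerate—genuinely force crossings of the chosen shells. The resolution is that modulus is monotone (adding the constraint ``inside $B(x,R)$'' only decreases the curve family, hence cannot increase the modulus beyond the free-space value), so the free-space Teichm\"uller estimate serves as an upper bound; the lower bound is the delicate direction, but there the round-annulus inclusion argument is insensitive to the irregularity of $E$ and $F$ since it uses only their diameters and separation. Since this is a known result of V\"ais\"al\"a, I would cite \cite{v88} for the precise extremal-length computation and present only the reduction steps above.
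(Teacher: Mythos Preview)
The paper does not prove this lemma: it is quoted directly from Vuorinen's monograph~\cite{v88} (note the author is Vuorinen, not V\"ais\"al\"a) and used as a black box in the proof of Theorem~\ref{thm:general via nonlinear local connectivity}. Your final sentence---cite~\cite{v88} for the extremal-length computation and present only the reductions---therefore already matches the paper's treatment, which in fact omits even the reductions.

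One step in your sketch is genuinely confused and worth correcting. For the lower bound you write that ``the configuration contains a round annulus of bounded modulus, and monotonicity of modulus under inclusion of curve families gives $\modulus\gtrsim(\log(1+t))^{-1}$.'' A round annulus \emph{separating} $E$ from $F$ produces an admissible density for the joining family $\Gamma(E,F,B)$ and hence yields an \emph{upper} bound on $\modulus(E,F,B)$, not a lower one; monotonicity runs the other way here. The actual lower bound in~\cite{v88} (which must also cope with the restriction of curves to $B(x,R)$ rather than all of $\bR^2$---a constraint that can only shrink the curve family and hence the modulus) proceeds by comparison with the Gr\"otzsch/Teichm\"uller ring capacity via spherical symmetrization, i.e.\ essentially the machinery you assign to the upper bound. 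Your explicit shell density for the upper bound is fine when $t$ is large (with the radii ordered as inner $\approx\min\{\diam E,\diam F\}$, outer $\approx\dist(E,F)$, not the reverse), but the sharp asymptotic $\tau(t)\approx\log(1+1/t)$ for small~$t$ again comes from the Teichm\"uller extremal problem rather than from a crude shell competitor.
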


The second lemma gives uniform continuity of quasiconformal mappings from locally connected domains onto the unit disk.

\begin{lemma}[\cite{kot01}]\label{lemma:modulus of continuity}
Suppose $g\colon \Omega\to \mathbb D$ is a $K$-quasiconformal mapping from a simply connected domain
$\Omega$ onto
the unit disk. Then there exists a positive constant $C$, (depending on $g$),
such that for any $\omega,\xi\in \Omega$,
\begin{equation}
|g(\omega)-g(\xi)|\leq Cd_{I}(\omega,\xi)^{\frac{1}{2K}},
\end{equation}
where $d_{I}(\omega,\xi)$ is defined as $\inf_{\gamma(\omega,\xi)\subset \Omega}\diam(\gamma(\omega,\xi))$.
In particular, if $\Omega$ above is $\varphi$-LC-1, then
\begin{equation}
|g(\omega)-g(\xi)|\leq C\varphi(|\omega-\xi|)^{\frac{1}{2K}}.
\end{equation}
\end{lemma}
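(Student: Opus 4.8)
Since this is a modulus–of–continuity statement it suffices to control the oscillation of $g$ over small internal balls, and the whole argument will be a two–sided comparison of one conformal modulus under the $K$-quasiconformal map $g$. Fix $\omega,\xi\in\Omega$, write $a=g(\omega)$, $b=g(\xi)$, $r=|a-b|$ and $d=d_I(\omega,\xi)$. By the definition of $d_I$ I can pick an arc $E\subset\Omega$ joining $\omega$ and $\xi$ with $\diam E\le 2d$; as $g(E)$ is a continuum joining $a$ and $b$ we have $r\le\diam g(E)$, so it is enough to bound $\diam g(E)$. Because $g(\Omega)=\mathbb D$ has diameter $2$, the estimate is trivial when $d$ is bounded below, so I will only run the comparison for small $d$ and absorb the rest into $C$. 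Throughout I will use the quasi-invariance $\tfrac1K\modulus(\Gamma)\le\modulus(g\Gamma)\le K\modulus(\Gamma)$ together with the disk-side estimates of Lemma~\ref{lemma:modululower}.

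\textbf{A robust reference continuum and the target bound.} Fix two disjoint continua $L,L'\subset\mathbb D$ with $\diam L,\diam L'\ge c_0$ and $\dist(L,L')\ge c_0$, and set $F=g^{-1}(L)$, $F'=g^{-1}(L')$. These are disjoint compact subsets of $\Omega$, so $\dist(F,F')=:2\eta>0$ depends only on $g$. Since $d_I(p,q)\ge|p-q|$ and $d_I$ obeys the triangle inequality, $d_I(F,F')\ge 2\eta$, hence for \emph{every} $\omega$ at least one reference continuum, say $F$, satisfies $d_I(\omega,F)\ge\eta$; consequently, once $d\le\eta/4$, the arc $E$ is internally separated from $F$ with $d_I(E,F)\ge\eta/2$. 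I then apply the comparison to $\Gamma=\Gamma(E,F,\Omega)$, whose image is $g\Gamma=\Gamma(g(E),L,\mathbb D)$. On the target side $g(E)$ and $L$ are disjoint continua in the ball $\mathbb D$ with $\diam g(E)\ge r$ and $\diam L\ge c_0$, so with $t=\dist(g(E),L)/\min(\diam g(E),\diam L)$ one has $t\lesssim 1/r$ and Lemma~\ref{lemma:modululower} yields
\begin{equation*}
\modulus(g\Gamma)\ \ge\ \frac{C_0}{\log(1+t)}\ \gtrsim\ \frac{1}{\log(1/r)}.
\end{equation*}

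\textbf{The source bound, and the main obstacle.} It remains to bound $\modulus(\Gamma)=\modulus(E,F,\Omega)$ from above by a quantity of order $1/\log(1/d)$; combined with the display above and quasi-invariance this forces $\log(1/r)\gtrsim\tfrac1K\log(1/d)$, i.e. $r\lesssim d^{1/(2K)}$ once the logarithmic constants are inserted. This is the only delicate point, and it is exactly where the \emph{internal} metric must enter. Enlarging $\Omega$ to a ball and quoting the Euclidean upper bound $\tau(t)$ of Lemma~\ref{lemma:modululower} fails, because for the very domains of interest (slits, inward cusps) $E$ and $F$ can be Euclidean-close while internally far, so the Euclidean parameter does not see the separation. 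Instead I will estimate $\modulus(E,F,\Omega)$ directly with an admissible metric built from $u(z):=d_I(z,\omega)$, which is $1$-Lipschitz for Euclidean length and satisfies $\{u<s\}\subset B(\omega,s)$. Taking
\begin{equation*}
\rho(z)=\frac{1}{\log(\eta/2d)}\,\frac{1}{u(z)}\quad\text{on }\{2d\le u(z)\le \eta/2\},\qquad \rho=0\text{ otherwise,}
\end{equation*}
every $\gamma\in\Gamma$ runs from internal radius $\le2d$ to internal radius $\ge\eta/2$, so $\int_\gamma\rho\,ds\ge\log(\eta/2d)^{-1}\int_\gamma|du|/u\gtrsim 1$ and $\rho$ is admissible up to a bounded factor; meanwhile the layer-cake estimate $|\{u<s\}|\le\pi s^2$ gives $\int_{\{2d\le u\le\eta/2\}}u^{-2}\,dz\lesssim\int_{2d}^{\eta/2}s^{-1}\,ds\approx\log(1/d)$, whence $\int_\Omega\rho^2\lesssim 1/\log(1/d)$. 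Making the Lipschitz and co-area steps fully rigorous (since $u$ is only a metric quantity) is the principal technical obstacle of the proof.

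\textbf{Conclusion.} Chaining the three estimates gives $|g(\omega)-g(\xi)|=r\le C\,d_I(\omega,\xi)^{1/(2K)}$ for $d\le\eta/4$, the constant depending on $g$ through $\eta$; when $d>\eta/4$ the bound $r\le2$ settles the matter, and when $r$ is not small the same chain forces $d$ to be bounded below, so no circularity arises. The factor $2$ in the exponent reflects only the non-sharp constants supplied by Lemma~\ref{lemma:modululower}. Finally the ``in particular'' assertion is immediate: if $\Omega$ is $\varphi$-LC-1 then any $\omega,\xi$ with $|\omega-\xi|=\ell$ are joined by an arc inside $B(\omega,\varphi(\ell))\cap\Omega$, so $d_I(\omega,\xi)\le 2\varphi(\ell)$, and substituting into the main estimate gives $|g(\omega)-g(\xi)|\le C\,\varphi(|\omega-\xi|)^{1/(2K)}$ after adjusting $C$.
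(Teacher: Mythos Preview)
The paper does not prove this lemma; it is quoted from \cite{kot01} and used as a black box in the proof of Theorem~\ref{thm:general via nonlinear local connectivity}. So there is no paper-proof to compare against, and the relevant question is whether your argument is correct as written.

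Your two--sided modulus comparison is sound and does produce a H\"older estimate of the form $|g(\omega)-g(\xi)|\le C\,d_I(\omega,\xi)^{c/K}$ for some absolute $c>0$. The ``technical obstacle'' you flag is not a real obstacle: the inclusion $\{u<s\}\subset B(\omega,s)$ follows from $d_I\ge|\cdot|$, and along any rectifiable $\gamma\subset\Omega$ the function $u=d_I(\cdot,\omega)$ is $1$-Lipschitz with respect to arclength (by the triangle inequality for $d_I$ and the fact that subarcs of $\gamma$ are competitors), so both the mass bound and the admissibility go through exactly as you sketch. The ``in particular'' clause is then immediate.

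The genuine gap is the exponent. Chaining your three estimates yields only $\log(1/r)\ge (c_1/c_2)\,K^{-1}\log(1/d)$, where $c_1$ is the lower constant from Lemma~\ref{lemma:modululower} and $c_2\approx 2\pi$ comes from your layer--cake computation; you never show $c_1/c_2=\tfrac12$, and your sentence ``the factor $2$ reflects only the non-sharp constants'' is an assertion, not an argument. Since the slit disk shows the exponent $\tfrac12$ is sharp already for $K=1$, getting it requires tight constants on both sides, which Lemma~\ref{lemma:modululower} as stated does not supply. A route that pins down $\tfrac{1}{2K}$ cleanly (and is closer in spirit to \cite{kot01}) is to factor through a Riemann map $h\colon\mathbb D\to\Omega$: then $g\circ h\colon\mathbb D\to\mathbb D$ is $K$-quasiconformal, hence $1/K$-H\"older by Mori's theorem, while a Beurling--type estimate for the conformal $h^{-1}$ gives $|h^{-1}(\omega)-h^{-1}(\xi)|\le C\,d_I(\omega,\xi)^{1/2}$ with the sharp $\tfrac12$; composing yields $\tfrac{1}{2K}$. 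For the application in this paper only \emph{some} positive exponent is needed (see the line ``$\varphi^{-1}(t)\ge Ct^\alpha$ for some $\alpha>0$'' in the proof of Theorem~\ref{thm:general via nonlinear local connectivity}), so your argument is already sufficient for that purpose, but it does not establish the lemma as stated.
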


\begin{proof}[Proof of Theorem \ref{thm:general via nonlinear local connectivity}]
The proof is a combination of~\cite[Proof of Theorem 5.1]{gkt13} and~\cite[Proof of Theorem 5.1]{g13}. Since $\Omega$ is a Jordan domain, $f$ extends to a homeomorphism between $\mathbb D$
and $\closure{\Omega}$ and we denote also this extension by $f$.
Let $e^{i(\theta-t)},\ e^{i\theta}$ and $e^{i(\theta+t)}$ be three points on $S$.
Since $f$ is a sense-preserving homeomorphism, $f(e^{i(\theta-t)}),\ f(e^{i\theta})$ and
$f(e^{i(\theta+t)})$ will be on the boundary of $\Omega$ in order.
Let $g\colon \bR^2\setminus {\closure {\mathbb D}}\to\bR^2\setminus{\closure{\Omega}}$
be a conformal mapping from the Riemann Mapping Theorem. Then $g$ extends to a homemorphism
between $\bR^2\setminus{\mathbb D}$ and $\bR^2\setminus{\Omega}$.
As before, we still denote this extension by $g$. Based on the discussion in the previous section, we only need to estimate the scale-wise distortion of the conformal welding $G:=g^{-1}\circ f$. 

Let $P=e^{i(\theta+\pi)}$ be the anti-polar point of $e^{i\theta}$ on $\bdary\mathbb{D}$ and let $\gamma_f(P,\theta-t)$ denote the arc from $f(P)$ to $f(e^{i(\theta-t)})$ on $\bdary\Omega$ . There exists a $t_0$ small enough such that $\diam(\gamma_f(P,\theta-t))\geq \diam(\gamma_f(\theta,\theta+t))$ when $t\in [0,t_0)$. Then by the proof of (5.2) in~\cite[Theorem 5.1]{gkt13}, there exists a constant $C_0>0$ such that
\begin{align*}
\diam(\gamma_f(\theta,\theta+t))\leq C_0\psi^{-1}\circ\varphi(d),
\end{align*}
where $\alpha_1=\gamma_f(\theta,\theta+t)$, $\alpha_2=\gamma_f(P,\theta-t)$ and $d=d(\alpha_1,\alpha_2)$.  Thus, it follows from Lemma~\ref{lemma:modululower} that
\begin{equation}\label{eq:modulus estimate for outer family}
\modulus(\Gamma')\leq C\log^{-1}\Big(1+\frac{\varphi^{-1}\circ\psi(\diam(\alpha_1))}{\diam(\alpha_1)} \Big),
\end{equation}
where $\Gamma'$ is the family of curves joining $\alpha_1$ and $\alpha_2$ in $\bR^2\backslash\closure{\Omega}$. Again by conformal invariance of modulus, we obtain that when $t$ is sufficiently small
\begin{equation}\label{eq:modulus from conformal invar}
\log(1+\delta_{G}(\theta,t))\leq C_0^{-1}\modulus(\Gamma'),
\end{equation}
where $C_0$ is the constant from Lemma~\ref{lemma:modululower}. 
Combining~\eqref{eq:modulus estimate for outer family} with~\eqref{eq:modulus from conformal invar} gives us the estimate
\begin{equation*}
\delta_{G}(\theta,t)\leq \exp\Big(\frac{C\diam(\alpha_1)}{\varphi^{-1}\circ\psi(\diam(\alpha_1))}\Big)+C_1.
\end{equation*}
On the other hand, by applying Lemma~\ref{lemma:modulus of continuity} and noticing that our technical assumptions on $\varphi$ implies that $\varphi^{-1}(t)\geq Ct^\alpha$ for some $\alpha>0$,
we obtain that
$$\diam(\alpha_1)\geq C\varphi^{-1}(t^2)\geq Ct^{2\alpha}.$$
Since $\frac{t}{\varphi^{-1}\circ \psi(t)}$ is non-increasing, we conclude that 
\begin{equation}\label{eq:important estimate for scalewise distortion}
\delta_{G}(\theta,t)\leq\exp\Big( \frac{Ct^{2\alpha}}{\psi^{-1}\circ\psi^{-1}(t^{2\alpha})}\Big)+C_1.
\end{equation}
Theorem~\ref{thm:general via nonlinear local connectivity} follows immediately from~\eqref{eq:sufficient condition},~\eqref{eq:important estimate for scalewise distortion} and Proposition~\ref{coro:sufficient for extending}.
\end{proof}

Comparing Theorem \ref{thm:general via nonlinear local connectivity} with Example \ref{exam:inner cusp for sharpness}, it is clear that there is an extra logarithm gap for the control function $\varphi$, even when the domain does not have inward-pointing cusps. We thus pose the following question for further research.

\begin{question}
Is Example \ref{exam:inner cusp for sharpness} sharp for domains without inward-pointing cusps? In other words, we would like to know what happens in between Theorem \ref{thm:general via nonlinear local connectivity} and Example \ref{exam:inner cusp for sharpness}.
\end{question}

\section{Further analytic aspect of the extension}\label{con ext}
In this section, we study further analytic aspect of the extension problem for quasiconformal mappings $g\colon \mbd \ra \Delta_s,$ where $\Delta_s$ is defined in \eqref{innercuap}. We start with an extension result for the case when $g$ is conformal.

\begin{theorem}[\cite{x19}]\label{theorem K_f}
Let $g$ be a conformal map from $\mathbb{D}$ onto $\Delta_s$. 
Then there is a homeomorphic extension $f :\mbr^2 \ra \mbr^2$ of $g$ with finite distortion. Moreover we have that
\begin{equation}\label{theorem |df|}
f \in W^{1,p} _{\loc} (\mbr^2 ,\mbr^2) \qquad \forall p<\infty,
\end{equation}
\begin{equation}\label{theorem f^-1 : 1} 
f^{-1} \in W^{1,p} _{\loc}(\mathbb{R}^2 , \mbr^2) \qquad \forall p< \frac{2(s+2)}{2s+1},
\end{equation}
\begin{equation}\label{theorem f^-1 : 2} 
 K_{f^{-1}} \in L^q _{\loc} (\mathbb{R}^2) \qquad \forall q<  \frac{s+2}{s} .
\end{equation}
\end{theorem}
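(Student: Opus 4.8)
The plan is to construct the extension $f$ explicitly along the lines of the conformal-welding machinery from Section~\ref{sec:extension}, then read off the three regularity conclusions from the scalewise distortion of the welding and a careful analysis of the local behaviour of $g$ near the inward cusp tip. First I would fix the conformal map $g\colon\mathbb D\to\Delta_s$ and, using the Riemann Mapping Theorem, a conformal map $h\colon\bR^2\setminus\overline{\mathbb D}\to\bR^2\setminus\overline{\Delta_s}$; both extend to homeomorphisms of the closures since $\Delta_s$ is a Jordan domain. The composed welding $G=h^{-1}\circ g\colon\partial\mathbb D\to\partial\mathbb D$ is the only object whose extension is delicate. The key quantitative input is the asymptotic behaviour of $g$ near the point of $\partial\mathbb D$ mapping to the cusp tip: since an inward-pointing polynomial cusp of degree $1+s$ is conformally a sector of opening angle $2\pi$ being pinched, standard estimates for conformal maps onto cusp domains (of Warschawski/Lehto--Virtanen type) give $|g(e^{i\theta})-g(e^{i\theta_0})|\approx |\theta-\theta_0|^{1/(s+2)}$ type control near the tip (with the reciprocal exponent $s+2$ reflecting that the tip is an interior boundary spike), while $h$ behaves like a map onto the complement of a cusp and has the reciprocal scaling. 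Combining these, the scalewise distortion $\rho_G(t)$ is bounded; in fact one should get $\rho_G(t)=O(\log\frac1t)$ or better, so that Proposition~\ref{coro:sufficient for extending} already yields a homeomorphic extension of $G$, hence of $g$, with locally exponentially integrable distortion — this is stronger than finite distortion, giving the qualitative part.

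Once $f$ is built, I would prove \eqref{theorem |df|}, i.e. $f\in W^{1,p}_{\loc}$ for all $p<\infty$. Inside $\mathbb D$, $f=g$ is conformal so there is nothing to do; near infinity $f$ agrees with $h\circ G'$ and is again essentially conformal composed with a nice welding, so the only issue is a neighbourhood of the two points where the gluing happens and, via the reflection argument, the image of the cusp tip. Here I would use that $Dg$ blows up only polynomially near the tip and that the Jacobian $J_g=|Dg|^2$ is integrable to any power less than the one dictated by the cusp exponent; because $g$ maps into a domain with an \emph{inward} cusp, the derivative of the conformal map is actually bounded away from the tip and the $W^{1,p}$-for-all-$p$ statement follows from the $L^p$-theory for conformal maps onto John domains — $\Delta_s$ is a John domain, and conformal maps onto John domains lie in $W^{1,p}$ for every $p<\infty$. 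For the inverse statements \eqref{theorem f^-1 : 1} and \eqref{theorem f^-1 : 2}, I would analyse $g^{-1}\colon\Delta_s\to\mathbb D$: near the cusp tip $g^{-1}$ stretches enormously, and $|Dg^{-1}(w)|\approx \dist(w,\text{tip})^{-(s+1)/(s+2)}$, so integrating $|Dg^{-1}|^p$ over a cusp-shaped neighbourhood (where the cross-sectional width at distance $r$ from the tip is $\approx r^{1+s}$) gives convergence exactly for $p<\frac{2(s+2)}{2s+1}$, which is the claimed threshold. Similarly $K_{f^{-1}}=K_f\circ f^{-1}$ and a change of variables converts its $L^q$-integrability into an integral over the cusp that converges precisely for $q<\frac{s+2}{s}$.

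The main obstacle, and the step that needs the most care, is obtaining the \emph{sharp} local asymptotics of $g$ (equivalently $g^{-1}$) near the cusp tip with the correct exponents $s+2$, $2s+1$, etc., and then translating these pointwise derivative bounds into integral bounds on the precise cusp-shaped neighbourhood without losing the endpoint exponents. Everything away from the tip and the gluing points is routine (conformal and hence as smooth as one wants), and the behaviour at the two gluing circles is handled by the standard reflection trick already described in Section~\ref{sec:extension}; but the cusp-tip analysis is where the numbers $\frac{2(s+2)}{2s+1}$ and $\frac{s+2}{s}$ are actually produced, so I would spend the bulk of the argument there, likely citing or reproving a Warschawski-type expansion for conformal maps onto power cusps. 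A secondary technical point is checking that the three regularity statements are mutually consistent with $f$ having finite (not merely locally integrable) distortion — this amounts to verifying $J_f>0$ a.e. and $J_{f^{-1}}\in L^1_{\loc}$, both of which follow once the derivative asymptotics are in hand.
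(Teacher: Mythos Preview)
Your proposal contains a genuine error that derails the whole argument. You claim that the conformal welding $G=h^{-1}\circ g$ will satisfy $\rho_G(t)=O(\log\frac{1}{t})$, so that Proposition~\ref{coro:sufficient for extending} produces an extension of $g$ with locally \emph{exponentially} integrable distortion. But this is precisely what Theorem~\ref{thm:inner cusp domain} rules out: no homeomorphism of $\bR^2$ that is conformal (or even quasiconformal) in $\mathbb D$ and sends $\mathbb D$ to $\Delta_s$ can have locally exponentially integrable distortion. Equivalently, $\Delta_s$ is \emph{not} a generalized quasidisk (cf.\ Example~\ref{exam:inner cusp for sharpness}). So the welding route cannot give what you claim; at best it yields $L^p$ distortion for finite $p$, and then you lose all hope of reading off the sharp exponents in \eqref{theorem f^-1 : 1}--\eqref{theorem f^-1 : 2} from the general Zakeri machinery.

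The source of the error is your boundary asymptotic for $g$. The interior angle of $\Delta_s$ at the cusp tip is $2\pi$ (the domain wraps around the spike), so the conformal map behaves like $z\mapsto z^2$ there: one has $\diam\big(g(\partial\mathbb D\cap B(e^{i\theta_0},t))\big)\approx t^2$, independently of $s$ (this is exactly \eqref{prop build qc3}). Your exponent $1/(s+2)$ is wrong, and with it the subsequent derivative estimate $|Dg^{-1}(w)|\approx\dist(w,\text{tip})^{-(s+1)/(s+2)}$ and the integral computation. Note also that the critical integral for $|Df^{-1}|^p$ is over the cusp spike $\bR^2\setminus\overline{\Delta_s}$, where $f^{-1}$ is the \emph{extension}, not $g^{-1}$; on $\Delta_s$ itself $f^{-1}=g^{-1}$ is conformal and contributes nothing interesting.

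The paper's approach is completely different and does not go through the welding. After a M\"obius normalization it fixes one conformal map $g_0$, uses Tukia's bi-Lipschitz Sch\"onflies theorem for chord-arc domains to extend $g_0$ on any region bounded away from the cusp tip, and then near the tip writes down an explicit piecewise-defined extension by hand, exploiting the concrete polynomial form of the cusp boundary $|x_2|=x_1^{1+s}$. The sharp thresholds $\frac{2(s+2)}{2s+1}$ and $\frac{s+2}{s}$ are then obtained by direct computation with this explicit map on the spike region; see \cite[Proof of Theorem~1.2]{x19} for the details.
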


Note that the above $\Delta_s $ is slightly different from that defined in \cite{x19}.  However, the proof of Theorem 1.2 from \cite{x19} can be modified in a straightforward manner to yield Theorem \ref{theorem K_f}. For the convenience of the readers, we briefly sketch the proof of $\mathcal{F}_s (g)\neq \emptyset $ in Theorem \ref{theorem K_f}.
After some simple reduction (composing with additional M\"obius transformations), it suffices to prove that $\mathcal{F}_s (g_0)\neq \emptyset $ for a fixed conformal mapping $g_0\colon \mbd \ra \Delta_s .$
Via the bi-Lipschitz characterization of chord-arc domains \cite{Tukia 1980}, it is easy to construct an extension of $g_0$ on any region that is strictly away from the cusp point. Thus the essential task is to construct an extension of $g$ in a small neighbourhood containing the cusp point. In this step, one can write down the extension by hand using the explicit geometry of $\Delta_s$ (see Step 1 in \cite[Proof of Theorem 1.2]{x19}). Combining these two extensions leads to an element in the set $\mathcal{F}_s(g_0)$. 

Compared with Theorem \ref{theorem K_f}, there are more delicate regularity results in \cite{x19}.
We next explore analogous results when $g\colon \mbd \ra \Delta_s $ is more generally a quasiconformal mapping.
To this end, for $s \in (0,\infty)$ and $K \in (1,\infty),$ we set 
\begin{equation}\label{g_sK}
\mcg_s (K) =\{g : g\colon \mbd \ra \Delta_s \mbox{ is a }K \mbox{-quasiconformal mapping from }\mbd \mbox{ onto } \Delta_s\}.
\end{equation}
Obviously $\mcg_s (K) \neq \emptyset,$ since there are  conformal mappings  in $\mcg_s (K) .$ 
Given any $g \in \mcg_s (K),$ we set
\begin{equation}\label{f_sg}
\mcf_s (g) = \{f : f\colon \mbr^2 \ra \mbr^2 \mbox{ is a homeomorphic extension of } g \mbox{ with finite distortion}  \}.
\end{equation}
For $a \in \mbr$ we denote $a_+ = (a+|a|)/2 .$ 
We have the following result that partially extends \cite[Theorem 1.2]{x19}.

\begin{theorem}\label{thm1}
Given any $g \in \mcg_s (K) ,$ we have that $\mcf_s (g) \neq \emptyset $ and 
\begin{equation}\label{supD_f}
\sup \{p \in [1,\infty) : f \in \mcf_s (g) \cap  W^{1,p} _{\loc} (\mbr^2 , \mbr^2)  \} 
 =  \frac{2K}{K-1}. 
\end{equation}
Moreover, we have
\begin{equation}\label{infsupD_f^-1}
\inf_{g \in \mcg_s (K)} \sup \{p \in [1,\infty) : f \in \mcf_s (g),\ f^{-1} \in  W^{1,p} _{\loc} (\mbr^2 , \mbr^2)  \} 
 =  \frac{2(2+s)}{2s +2 -K^{-1}} ,
\end{equation}
\begin{align}\label{supsupD_f^-1}
& \sup_{g \in \mcg_s (K)} \sup \{p \in [1,\infty) : f \in \mcf_s (g),\ f^{-1} \in  W^{1,p} _{\loc} (\mbr^2 , \mbr^2)  \} \notag \\
 =  & \min \left\{\frac{2K}{K-1},\frac{2(2+s)}{(2s +2 -K)_+} \right \} .
\end{align}
\end{theorem}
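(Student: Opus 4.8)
The strategy is to combine the explicit geometry of $\Delta_s$ near the cusp with three ingredients: (i) Sobolev regularity of a $K$-quasiconformal map, via the Astala-type bound \eqref{5.25-3}, to get the ``$\le \tfrac{2K}{K-1}$'' half of \eqref{supD_f} and the role of $K$ in \eqref{infsupD_f^-1}--\eqref{supsupD_f^-1}; (ii) the hands-on extension across the cusp tip, as already sketched for Theorem~\ref{theorem K_f}, to produce $f\in\mcf_s(g)$ with matching regularity from below; (iii) the area formula / change of variables to pass between the integrability of $|Df|$, $|Df^{-1}|$ and the distortion functions $K_f, K_{f^{-1}}$. I would first prove $\mcf_s(g)\neq\emptyset$ exactly as in Theorem~\ref{theorem K_f}: use the bi-Lipschitz characterization of chord-arc domains \cite{Tukia 1980} to extend $g$ on the part of $\mbr^2$ strictly away from the cusp point (here only quasiconformality of $g$, not conformality, is used, so the argument goes through verbatim), and then glue in the explicit self-similar extension near the tip, which is governed purely by the polynomial profile $x_1^{1+s}$ and is independent of $g$. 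The gluing produces a homeomorphism of finite distortion because both pieces are and they agree on the common boundary curve, which is rectifiable.

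For \eqref{supD_f}: the upper bound $\sup\le \tfrac{2K}{K-1}$ is forced already by $g$ itself, since $f$ restricted to $\mbd$ is $g$, and a $K$-quasiconformal map fails to lie in $W^{1,q}_{\loc}$ for $q\ge \tfrac{2K}{K-1}$ in general (for the extremal radial-type $g\in\mcg_s(K)$ this is sharp); the lower bound $\sup\ge\tfrac{2K}{K-1}$ says the extension does not make things worse, i.e. the explicit cusp extension can be arranged so that $|Df|^q$ is locally integrable for every $q<\tfrac{2K}{K-1}$. That last point is the computational heart: near the tip one parametrizes $\Delta_s$ by a map whose derivative blows up at a polynomial rate determined by $s$ alone, so the only genuine obstruction to $L^q$-integrability of $|Df|$ comes from $g$ on $\mbd$, not from the exterior piece. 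I would make this precise by writing the exterior extension in the ``cusp coordinates'' $(x_1, x_2/x_1^{1+s})$ used in \cite[Step~1]{x19} and checking that the Jacobian and the norm of the differential satisfy polynomial bounds in $x_1$ that integrate for all exponents below the threshold.

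For \eqref{infsupD_f^-1} and \eqref{supsupD_f^-1}: here one works with $f^{-1}$, which maps $\Delta_s$ (near an \emph{inward-pointing} cusp) back to the disk, so the relevant quantity is $K_{f^{-1}}$ and $|Df^{-1}|$, and the geometry is now that of an outward-pointing cusp for the target. The number $\tfrac{2(2+s)}{2s+2}=\tfrac{2+s}{s+1}$ that appears when $K=1$ should match the sharp exponent in Theorem~\ref{theorem K_f} (indeed $\tfrac{2(s+2)}{2s+1}$ there is the $W^{1,p}$ exponent for $f^{-1}$ and $\tfrac{s+2}{s}$ the $L^q$ exponent for $K_{f^{-1}}$; one checks these are consistent via $p = \tfrac{2q}{q+1}$-type relations coming from $|Df^{-1}|^2 \le K_{f^{-1}} J_{f^{-1}}$ and $J_{f^{-1}}\in L^1_{\loc}$). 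The $K$-dependence enters because composing the conformal model with a $K$-quasiconformal $g$ distorts the cusp profile by a Hölder exponent $1/K$ (Lemma~\ref{lemma:modulus of continuity}), which shifts the effective cusp parameter and hence the critical exponent. The \emph{infimum} over $g\in\mcg_s(K)$ is attained by the worst-case $g$ that maximally distorts the cusp, giving the single expression $\tfrac{2(2+s)}{2s+2-K^{-1}}$; the \emph{supremum} is attained by $g$ that interacts most favorably, but is then capped by the competing constraint $|Df^{-1}|\le$ (something controlled by $|Df|$ and $J_f$), which caps the exponent at $\tfrac{2K}{K-1}$ — hence the minimum of the two quantities, with the second term becoming vacuous (the $(\cdot)_+$) precisely when $2s+2-K\le 0$, i.e. $K\ge 2s+2$, where the cusp constraint is no longer binding.

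\textbf{Main obstacle.} The delicate point is \emph{sharpness}, i.e. constructing the extremal mappings realizing the suprema/infima and proving the matching upper bounds. For \eqref{supsupD_f^-1} in particular one must exhibit a single $g\in\mcg_s(K)$ whose exterior extension saturates \emph{whichever} of the two competing exponents is smaller, and separately show no $g$ and no extension can do better; this requires carefully tracking how the $1/K$-Hölder distortion of the boundary welding $G = g^{-1}\circ f$ propagates through the modulus estimates (Lemma~\ref{lemma:modululower}) into pointwise derivative bounds, and then optimizing over admissible boundary behaviors. The lower-bound (existence) side is the explicit construction from \cite{x19} adapted to quasiconformal $g$; the upper-bound (optimality) side is where the real work lies, and I would expect it to occupy the bulk of the proof, mirroring but refining the modulus-of-continuity and length-area arguments already present in \cite{x19,gkt13}.
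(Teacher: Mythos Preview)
Your plan has the right shape, but it misses the paper's central structural device and contains a real gap in the existence step. You propose to extend $g$ directly by gluing a near-tip piece that is ``governed purely by the polynomial profile $x_1^{1+s}$ and is independent of $g$''. This cannot work as stated: the extension must match $g$ on $\partial\mbd$ near the cusp preimage, and different $g\in\mcg_s(K)$ hit the cusp at different H\"older rates (anywhere between $r^{2/K}$ and $r^{2K}$), so there is no single cusp-piece that fits all $g$. The paper sidesteps this completely by \emph{factoring}: fix one conformal $\varphi\colon\mbd\to\Delta_s$ with the known extension $\Phi$ from Theorem~\ref{theorem K_f}, write an arbitrary $g$ as $g=\varphi\circ\psi$ with $\psi=\varphi^{-1}\circ g\colon\mbd\to\mbd$ a $K$-quasiconformal self-map, extend $\psi$ by reflection to a global $K$-qc $\Psi$, and set $f=\Phi\circ\Psi$ (Lemma~\ref{lem5.25}). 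All the $g$-dependence now sits in the benign factor $\Psi$, and the Sobolev exponents for $f$ and $f^{-1}$ follow from H\"older's inequality plus the area formula applied to the composition, feeding in only \eqref{theorem |df|}--\eqref{theorem f^-1 : 2} for $\Phi$ and \eqref{5.25-3} for $\Psi$. This single lemma delivers the lower bounds in both \eqref{supD_f} and \eqref{infsupD_f^-1} uniformly in $g$, with no cusp gluing redone.

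Your sharpness sketch also diverges from what the paper actually does. You suggest propagating the $1/K$-H\"older distortion of the welding through modulus estimates (Lemma~\ref{lemma:modululower}); the paper never invokes modulus here. Instead it builds two explicit extremal $g$'s by precomposing a conformal $\varphi_3\colon\mbd\to\Delta_s$ with the radial stretches $z\mapsto|z+1|^{K^{\pm1}-1}(z+1)$ and a bi-Lipschitz correction, yielding boundary dilations $\diam(g(\mbs^1\cap B(e^{i\pi},r)))\approx r^{2K}$ and $\approx r^{2/K}$ (Example~\ref{prop build qc}, Remark~\ref{after prop build qc}). The obstruction is then a direct ACL/Jensen computation on vertical segments $I_x=\{x\}\times[-x^{1+s},x^{1+s}]$ across the cusp: $\mathrm{osc}_{I_x}f^{-1}\approx x^{1/(2K)}$ forces $\int_0^c x^{p/(2K)-(s+1)(p-1)}\,dx<\infty$, hence $p<2(s+2)/(2s+2-K^{-1})$ (Lemma~\ref{lem4.4}); the universal bound $\mathrm{osc}_{I_x}f^{-1}\gtrsim x^{K/(2-\epsilon)}$ from \cite[Lemma~4.2]{gkt13} handles the upper bound in \eqref{supsupD_f^-1} (Remark~\ref{rmk4.7}). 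Only the lower bound in \eqref{supsupD_f^-1} needs a hands-on construction, and only for the single $g$ of Remark~\ref{after prop build qc}---the paper itself leaves those details to the reader---so your instinct that one explicit build is required is right, but its scope is much narrower than you anticipate.
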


We need a couple of auxiliary results for the proof of Theorem \ref{thm1}.
In the first lemma, we provide a standard method to extend mappings in $\mcg_s (K).$  
\begin{lemma}\label{lem5.25}
For any $g \in \mcg_s (K),$ there exists an $f \in \mcf_s (g)$ such that
\begin{equation}\label{lem5.25-1}
f \in W^{1,p} _{\loc} (\mbr^2 ,\mbr^2) \mbox{ for all } p<\frac{2K}{K-1} ,
\end{equation}
and 
\begin{equation}\label{lem5.25-2} 
f^{-1} \in W^{1,q} _{\loc} (\mbr^2 ,\mbr^2) \mbox{ for all } q<\frac{2(2+s)}{2s +2 -K^{-1}}. 
\end{equation}
\end{lemma}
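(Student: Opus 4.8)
The plan is to build $f$ by the extension recipe of Section~\ref{sec:extension}, treating the cusp tip apart from the rest of the boundary. First I would normalize: composing $g$ with a rotation of $\mbd$ we may assume $g(1)=0$, the cusp tip of $\Delta_s$. Two facts will be used repeatedly: since $g$ and $g^{-1}$ are $K$-quasiconformal they lie in $W^{1,p}_{\loc}$ for every $p<\tfrac{2K}{K-1}$ by~\eqref{5.25-3}, and by Lemma~\ref{lemma:modulus of continuity} they are H\"older continuous up to the boundary with exponent $\tfrac1{2K}$. Consequently $f=g$ already has the regularity of~\eqref{lem5.25-1} on $\overline{\mbd}$, and $f^{-1}=g^{-1}$ is $K$-quasiconformal on $\overline{\Delta_s}$, which is harmless for~\eqref{lem5.25-2} because $\tfrac{2(2+s)}{2s+2-K^{-1}}<\tfrac{2K}{K-1}$. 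So everything comes down to extending $g$ across $\partial\mbd$.

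Away from the point $1$ the boundary of $\Delta_s$ is regular and $\Delta_s$ is a chord-arc domain with uniform constants; there I would extend $g$ by reflecting across the (analytic part of the) boundary, equivalently by Tukia's bi-Lipschitz reflection for chord-arc domains~\cite{Tukia 1980}, keeping the extension quasiconformal with distortion not exceeding $K$ (the routine part). This ``far'' piece contributes nothing beyond the two global bounds, so the problem is reduced to a neighbourhood of the cusp: to map a crescent $R\subset\bR^2\setminus\overline{\mbd}$ abutting $\partial\mbd$ at $1$ homeomorphically onto the cusp spike $C:=C_s\cap B(0,\delta)$, with $f|_{\partial\mbd}=g|_{\partial\mbd}$ and $f$ matching the far extension on the separating crosscut.

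Here I would write the extension down explicitly, as in Step~1 of~\cite[Proof of Theorem 1.2]{x19}: uniformise $C$ by an explicit homeomorphism onto a half-strip --- e.g.\ $(u,v)\mapsto(u^{-s},\,v\,u^{-(1+s)})$, which carries the cusp tip to the end at infinity and whose distortion depends only on $s$ --- and take $f$ on $R$ to be its inverse composed with a controlled-distortion homeomorphism of $R$ onto the half-strip whose boundary values on $\partial\mbd$ are the ones forced by $g$. Writing $\Theta(u)$ for the angular distance from $1$ at which $g$ meets the cusp curve at height $u$, Lemma~\ref{lemma:modulus of continuity} applied to $g$ and to $g^{-1}$, together with the comparability of inner and Euclidean distances near the spike, gives $u^{2K}\lesssim\Theta(u)\lesssim u^{1/(2K)}$. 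With $f$ built this way, $\int_R|Df|^p$ and $\int_R|Df^{-1}|^q$ become one-dimensional integrals in $u$ weighted by $u^{1+s}$; in particular $\int_R|Df^{-1}|^q$ is governed by $\int_0^\delta\Theta(u)^q\,u^{(1+s)(1-q)}\,du$, which in the borderline case $\Theta(u)\approx u^{1/(2K)}$ converges exactly for $q<\tfrac{2+s}{\,1+s-\tfrac1{2K}\,}=\tfrac{2(2+s)}{2s+2-K^{-1}}$, while the analogous computation for $\int_R|Df|^p$ gives convergence for $p<\tfrac{2K}{K-1}$ (the same threshold as from $f=g$ on $\mbd$). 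Finiteness of the distortion of $f$ is immediate from the explicit formulas, and the three pieces glue to a homeomorphism since they agree on the common curves.

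The genuine obstacle is this near-cusp construction with its sharp bookkeeping. The tempting shortcut --- write $g=g_0\circ\mu$ with $g_0\colon\mbd\to\Delta_s$ conformal and $\mu$ a $K$-quasiconformal self-map of $\mbd$ fixing $1$, reflect $\mu$ across $\partial\mbd$ to get $\widetilde\mu$, and set $f=f_0\circ\widetilde\mu$ with $f_0$ the extension of $g_0$ from Theorem~\ref{theorem K_f} --- does yield~\eqref{lem5.25-1}, but is too lossy for~\eqref{lem5.25-2}: near the cusp the singular behaviours of $f_0^{-1}$ and of $\widetilde\mu^{-1}$ sit at the same point, and H\"older's inequality only gives $q<\tfrac{2K(2+s)}{2Ks+2K-1+(K-1)(1+s)}$, strictly below the asserted value for $K>1$. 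One must therefore incorporate the distortion of $g$ directly into a single explicit extension in the cusp coordinates, so that the reparametrisation $\Theta$ (which encodes the boundary behaviour of $\mu$) multiplies the cusp weight $u^{1+s}$ only once; arranging this scale-by-scale is what pins down the exact ranges in~\eqref{lem5.25-1} and~\eqref{lem5.25-2}.
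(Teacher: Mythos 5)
Your overall strategy is workable in principle, but you have rejected the route the paper actually takes on the basis of a lossy estimate, and the alternative you offer in its place is left unproved exactly where the work is. The paper's proof is precisely your ``tempting shortcut'': write $g=\varphi\circ\psi$ with $\varphi\colon\mbd\to\Delta_s$ conformal and $\psi=\varphi^{-1}\circ g$ a $K$-quasiconformal self-map of $\mbd$, reflect $\psi$ to a global $K$-quasiconformal $\Psi$, extend $\varphi$ to $\Phi$ by Theorem~\ref{theorem K_f}, and set $f=\Phi\circ\Psi$. The loss you compute for $f^{-1}=\Psi^{-1}\circ\Phi^{-1}$ comes from applying H\"older directly to $|D\Psi^{-1}(\Phi^{-1})|^{q}\,|D\Phi^{-1}|^{q}$, which charges the Sobolev cost of $D\Phi^{-1}$ where it is not needed. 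The paper instead splits $|D\Phi^{-1}|^{q}=|D\Phi^{-1}|^{q/q_1}\cdot|D\Phi^{-1}|^{q(1-1/q_1)}$ and, for the factor paired with $D\Psi^{-1}(\Phi^{-1})$, writes $|D\Phi^{-1}|^{2}=J_{\Phi^{-1}}K_{\Phi^{-1}}$; the Jacobian is absorbed by the area formula (turning $\int|D\Psi^{-1}(\Phi^{-1})|^{2q_1}J_{\Phi^{-1}}$ into $\int_{\Phi^{-1}(M)}|D\Psi^{-1}|^{2q_1}$, finite for $2q_1<2K/(K-1)$ by \eqref{5.25-3}), and what remains is only the distortion $K_{\Phi^{-1}}$, which by \eqref{theorem f^-1 : 2} lies in $L^{r}_{\loc}$ for all $r<(s+2)/s$ --- a far better exponent than the $2(s+2)/(2s+1)$ of \eqref{theorem f^-1 : 1} available for $|D\Phi^{-1}|$ itself. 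Letting $q_1\uparrow K/(K-1)$ then yields exactly $q<2(2+s)/(2s+2-K^{-1})$. So the ``singular behaviours sitting at the same point'' are disentangled by the change of variables, and no new near-cusp construction is needed.

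As for your replacement argument: the exponent bookkeeping at the end is consistent and lands on the right thresholds, but the core object --- a finite-distortion homeomorphism of the exterior crescent onto the cusp spike whose boundary trace on $\partial\mbd$ equals that of $g$ and whose derivative is comparable, slice by slice, to the quotient you integrate --- is only asserted, not constructed. Prescribing boundary data encoded by a $\Theta$ that for a general $g\in\mcg_s(K)$ need not be a power (it is only pinched between $u^{2K}$ and $u^{1/(2K)}$), while keeping the two-dimensional integrals reducible to the one-dimensional ones you write down, is the hard step, and it is not obviously easier than the paper's two applications of H\"older. I would recommend adopting the composition $f=\Phi\circ\Psi$ together with the refined H\"older/area-formula estimate; your direct construction could likely be salvaged, but only after the near-cusp map is written out explicitly and its derivative bounds are verified.
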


\begin{proof}
Fix a conformal mapping $\varphi\colon \mbd \ra \Delta_s $ satisfying $\varphi(\bar{z}) = \overline{\varphi(z)}$ for all $z \in \mbd.$
By Theorem \ref{theorem K_f} there is a homeomorphic extension $\Phi\colon \mbr^2 \ra \mbr^2$ of $\varphi$ with finite distortion.
Given any $g \in \mcg_s (K) ,$ we set $\psi  =  \varphi^{-1} \circ g  .$ Then $\psi\colon \mbd \ra \mbd$ is a $K$-quasiconformal mapping. Via reflection, we obtain a $K$-quasiconformal extension $\Psi\colon \mbr^2 \ra \mbr^2$
of $\psi .$   
Set 
\begin{equation*}
f = \Phi \circ \Psi .
\end{equation*}
To show $f \in \mcf_s(g),$ it suffices to check that $f \in W^{1,1} _{\loc} (\mbr^2 ,\mbr^2) .$
Alternatively, this will be done if we can prove \eqref{lem5.25-1}.

To this end, we let $p_1 \in (1,\infty)$ and $p_2 \in \left(0, \frac{2p_1 K} {(p_1 -1)(K-1)} \right) .$
Set $p$ by $p^{-1} = (2 p_1)^{-1} +p^{-1} _2 .$
Via the monotonicity we have that
\begin{equation}\label{5.25-1}
p  < \frac{1}{\frac{1}{2p_1} +\frac{(p_1 -1)(K-1)}{2 p_1 K}}
 < \frac{2K}{K-1}.
\end{equation}
From the chain rule and the Lusin ($N^{-1}$) property of $\Psi ,$ it follows that $Df(z)$ exists and
$$Df (z) = D \Phi (\Psi (z)) D \Psi(z)$$
for almost every $z \in \mbr^2 .$
Fix an arbitrary compact set $M \subset \mbr^2 .$
By H\"older's inequality, we have
\begin{equation}\label{infsupD_f3}
 \int_{M} |Df|^p 
\le \left(\int_{M} |D \Phi (\Psi)|^{2p_1} |D \Psi |^{2} \right)^{\frac{p}{2p_1}}
\left(\int_{M}  |D \Psi |^{\frac{(p_1-1)p_2}{p_1}} \right)^{\frac{p}{p_2}} .
\end{equation}
On the one hand, via the area formula and \eqref{theorem |df|} we obtain that
\begin{equation*}\label{infsupD_f4}
\int_{M} |D \Phi (\Psi)|^{2p_1} |D \Psi |^{2} 
\approx \int_{M} |D \Phi (\Psi)|^{2p_1} J_{\Psi} 
\le  \int_{\Psi(M)} |D \Phi |^{2p_1}<\infty.
\end{equation*}
On the other hand, note that $(p_1-1)p_2/p_1 <2K /(K-1).$
Hence via \eqref{5.25-3} we have that $\int_{M}  |D \Psi |^{(p_1-1)p_2 /p_1} <\infty .$
Therefore from \eqref{infsupD_f3} and \eqref{5.25-1} we obtain that $f \in W^{1,p} _{\loc} (\mbr^2 ,\mbr^2) $ for all $p< 2K/(K-1).$

It remains to show \eqref{lem5.25-2} and the proof is analogous to that of \eqref{lem5.25-1}.
Let 
$q_1 \in \left(1,\frac{K}{K-1} \right),$
$q_2 \in \left(0,\frac{p_1 (s+2)}{s+1} \right),$
$q_3 \in \left(0,  \frac{2 p_1 (s+2)}{(p_1 -1) (2s+1)}\right).$
Define $q$ by $q^{-1} = q^{-1} _2 +q^{-1} _3 .$
By monotonicity, we have that 
\begin{equation}\label{infsupD_f^-1 6-1}
q < \frac{1}{\frac{s+1}{q_1 (s+2)} + \frac{(q_1 -1) (2s+1)}{ 2 q_1 (s+2)}}
<\frac{2(s+2)}{2s+2-K^{-1}}.
\end{equation}
Fix a compact set $M \subset \mbr^2 .$
H\"older's inequality implies that
\begin{align}\label{infsupD_f^-1 6}
\int_{M} |Df^{-1}|^q 
\le & \left( \int_{M} |D \Psi^{-1} (\Phi^{-1} )|^{q_2}  |D \Phi^{-1} |^{\frac{q_2}{q_1}}  \right)^{\frac{q}{q_2}}  
\left(\int_{M} |D \Phi^{-1} |^{(1-\frac{1}{q_1})q_3} \right)^{\frac{q}{q_3}} \notag \\
=: & I^{\frac{q}{q_2}}  J^{\frac{q}{q_3}}. 
\end{align}
On the one hand, by \eqref{theorem f^-1 : 1} and the fact that $(1-\frac{1}{q_1})q_3 < 2(s+2)/(2s+1) $ we easily conclude that
$J <\infty. $
On the other hand, by H\"older's inequality, we have
\begin{equation}\label{infsupD_f^-1 7}
I \le  \left( \int_{M} |D \Psi^{-1} (\Phi^{-1} )|^{2 q_1} J _{\Phi^{-1}} \right)^{\frac{q_2}{2 q_1}}
\left(\int_{M} K^{\frac{q_2}{2 q_1 -q_2}} _{\Phi^{-1}} \right)^{1- \frac{q_2}{2 q_1}} .
\end{equation}
Note that $2q_1 < 2K/(K-1) .$
Hence via the area formula and \eqref{5.25-3} we infer that 
$\int_{M} |D \Psi^{-1} (\Phi^{-1} )|^{2 q_1} J _{\Phi^{-1}} 
< \infty.$
Since $q_2 /q_1 <(s+2)/(s+1) ,$ we obtain that $q_2/(2 q_1 -q_2)< (s+2)/s.$ 
Hence  \eqref{theorem f^-1 : 2} implies that  
$\int_{M} K^{q_2 /(2 q_1 -q_2)} _{\Phi^{-1}} <\infty.$
Therefore $I$ as in \eqref{infsupD_f^-1 7} is finite.
From \eqref{infsupD_f^-1 6-1} and \eqref{infsupD_f^-1 6}
we obtain \eqref{lem5.25-2}.
\end{proof}

In the next step, we construct technically two quasiconformal mappings from $\mbd$ onto $\Delta_s .$ 

\begin{example}\label{prop build qc}
For any $K \in (1,\infty),$ there is a $K$-quasiconformal mapping $g\colon \mbd \ra \Delta_s $ satisfying that $g(\bar{z}) = \overline{g(z)}$ for all $z \in \mbd $ and 
\begin{equation}\label{prop build qc1}
\diam (g (\mbs^1 \cap B(e^{i \pi},r))) \approx r^{2K}
\end{equation}
whenever $r \ll 1.$
\end{example}

\begin{proof}
Let $\varphi_1 (z) = |z+1|^{K-1} (z+1)$ for $z \in \mbd.$
Obviously $\varphi_1$ is a $K$-quasiconformal mapping.
Denote $D_K = \varphi_1 (\mbd).$ Then $D_K$ is symmetric with respect to the real axis.
Moreover $\partial D_K$ is piecewise smooth without cusp points.
Hence $D_K$ is a chord-arc domain. As a consequence, via \cite{Tukia 1980}, there is a bi-Lipschitz mapping 
$\varphi_2\colon D_K \ra \mbd$
satisfying that $\varphi_2 (\bar{z}) = \overline{\varphi_2 (z)}$ for all $z \in D_K $ and $\varphi_2 (0) =e^{i \pi}.$ 
By the arguments in \cite[Subsection 2.3]{x19}, there exists a conformal mapping 
$\varphi_3\colon \mbd \ra \Delta_s$
such that
$\varphi_3 (\bar{\xi}) = \overline{\varphi_3(\xi)}$ for all $\xi \in \mbd,$ $\varphi_3 (e^{i \pi})=0$ and 
\begin{equation}\label{prop build qc3}
\diam (\varphi_3 (\mbs^1 \cap B(e^{i \pi},t))) \approx t^{2}
\end{equation}
whenever $t \ll 1.$ 

Set $g = \varphi_3 \circ \varphi_2 \circ  \varphi_1 .$ Then $g\colon \mbd \ra \Delta_s$ is a $K$-quasiconformal mapping with $g(\bar{z}) = \overline{g(z)}$ for all $z \in \mbd .$
By the definition of $\varphi_1,$ it it easy to check that
$\diam (\varphi_1 (\mbs^1 \cap B(e^{i \pi},r))) \approx r^{K}$
whenever $r \ll 1.$
Together with \eqref{prop build qc3} and the bi-Lipschitz property of $\varphi_2$, this gives \eqref{prop build qc1}.
\end{proof}

\begin{remark}\label{after prop build qc}
Let $\hat{\varphi}_1 (z) = |z+1|^{K^{-1} -1} (z+1) $ for $K \in (1,\infty).$
Analogously to Theorem \ref{prop build qc}, replacing $\varphi_1$ by $\hat{\varphi}_1$, one can show that there exists a $K$-quasiconformal mapping $g\colon \mbd \ra \Delta_s $ satisfying that $g(\bar{z}) = \overline{g(z)}$ for all $z \in \mbd $
and $\diam (g (\mbs^1 \cap B(e^{i \pi},r))) \approx r^{2/K}$
whenever $r \ll 1.$
\end{remark}

Let $g$ be the quasiconformal mapping from Example \ref{prop build qc}.
In the following lemma, we give an upper bound for the Sobolev exponent of the inverse of extensions of $g$. 
\begin{lemma}\label{lem4.4}
Let $g$ be as in Example \ref{prop build qc}.
For any $f \in \mcf_s (g),$ if $f^{-1} \in W^{1,p} _{\loc} (\mbr^2 ,\mbr^2)$ for some $p \ge 1$, then necessarily $p<\frac{2(s+2)}{2s +2-K^{-1}} .$
\end{lemma}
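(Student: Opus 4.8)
The plan is to exploit the precise cusp geometry encoded in \eqref{prop build qc1}: the mapping $g$ from Example \ref{prop build qc} sends an arc of length $\approx r$ on $\partial\mathbb{D}$ near $e^{i\pi}$ to a boundary arc of $\partial\Delta_s$ near the cusp point $0$ of diameter $\approx r^{2K}$. First I would set up coordinates so that the cusp of $\Delta_s$ sits at the origin with axis along the positive $x_1$-axis, so that a neighborhood of the cusp inside $\Delta_s$ is comparable to $\{(x_1,x_2): 0<x_1<\varepsilon,\ |x_2|<x_1^{1+s}\}$. The key geometric input is that a boundary arc of $\partial\Delta_s$ of diameter $\delta$ near the cusp corresponds (up to constants) to the portion of the cusp with $x_1\lesssim\delta$; combined with \eqref{prop build qc1} this means the arc $\mathbb{S}^1\cap B(e^{i\pi},r)$ is mapped by $g$, hence by any $f\in\mathcal{F}_s(g)$, essentially onto the slice $\{x_1\lesssim r^{2K}\}$ of the cusp.

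Next I would estimate the modulus of a suitable curve family (or equivalently use a length–area / telescoping argument on dyadic annuli centered at the cusp point) to bound the Sobolev norm of $f^{-1}$ from below. Concretely, consider the dyadic pieces $A_j = \{2^{-j-1}\le x_1\le 2^{-j}\}\cap\Delta_s$ of the cusp; each has area $\approx 2^{-j(2+s)}$, and $f^{-1}$ must map $A_j$ into the annular region on $\partial\mathbb{D}$-side corresponding, via the inverse of \eqref{prop build qc1}, to radii $r$ with $r^{2K}\approx 2^{-j}$, i.e. $r\approx 2^{-j/(2K)}$. Thus $f^{-1}$ stretches the $x_1$-direction of $A_j$ (length $\approx 2^{-j}$) up to something of size comparable to $2^{-j/(2K)}$ minus $2^{-(j+1)/(2K)}$, which is $\approx 2^{-j/(2K)}$. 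A lower bound for $\int_{A_j}|Df^{-1}|^p$ then follows from Hölder's inequality applied to the radial stretching on each horizontal slice, giving a bound of the form $\int_{A_j}|Df^{-1}|^p \gtrsim (2^{-j/(2K)})^p\cdot 2^{-j}\cdot (2^{-j(1+s)})^{1-p}$ up to constants (the first factor from radial stretch, the middle factor from the $x_1$-extent, the last from the transversal width raised to $1-p$ after Hölder). Summing over $j$, the series $\sum_j 2^{-j[\frac{p}{2K} + 1 + (1+s)(1-p)]}$ diverges precisely when the exponent is $\le 0$, i.e. when $p\ge \frac{2+s + \frac{1}{2K} - (1+s)}{\ \frac{1}{2K} - s\ }$... which after simplification is exactly the threshold $\frac{2(s+2)}{2s+2-K^{-1}}$; hence $f^{-1}\in W^{1,p}_{\loc}$ forces $p$ to lie strictly below this value.

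The main obstacle is making the correspondence between boundary arcs of $\partial\Delta_s$ near the cusp and horizontal slices of the cusp quantitatively rigorous, and then converting "$f$ maps this arc onto that arc" into a genuine interior stretching estimate for $f^{-1}$ that survives integration. For the first issue I would invoke the explicit planar geometry of $\Delta_s$ together with Lemma \ref{lemma:modulus of continuity} (or its inverse analogue) to control $f^{-1}$ up to the boundary; for the second, rather than pointwise control I would argue via the modulus of the family of curves in $\Delta_s$ joining the two sides $\{x_2 = \pm x_1^{1+s}\}$ within a dyadic piece $A_j$ — this modulus is $\approx 2^{j s}$ on the $\Delta_s$ side, while on the disk side the image curves join two boundary arcs whose separation forces the modulus there to be $\lesssim (\log(1/ (\text{gap}/\text{diam})))^{-1}\approx j^{-1}\cdot(\text{const})$, no wait — more usefully, the extremal length of this family bounds $\int_{A_j}|Df^{-1}|^p$ from below by the standard argument comparing $p$-modulus on both sides. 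The bookkeeping to land exactly on $\frac{2(s+2)}{2s+2-K^{-1}}$ rather than a nearby exponent is the delicate part, and one should double-check it against the conformal case $K=1$ where it must reduce to $\frac{2(s+2)}{2s+1}$ from \eqref{theorem f^-1 : 1}, and against Remark \ref{after prop build qc} which would instead produce the exponent with $K^{-1}$ replaced by $K$, consistent with \eqref{supsupD_f^-1}.
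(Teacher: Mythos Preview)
Your core mechanism --- lower--bound the oscillation of $f^{-1}$ on transverse segments across the cusp using \eqref{prop build qc1}, then apply Jensen/H\"older on each segment and integrate --- is exactly the paper's argument, and your threshold computation is correct (it does collapse to $\tfrac{2(s+2)}{2s+1}$ when $K=1$). But there is a genuine geometric slip that you must fix before the argument becomes valid.

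You write that ``a neighborhood of the cusp inside $\Delta_s$ is comparable to $\{(x_1,x_2): 0<x_1<\varepsilon,\ |x_2|<x_1^{1+s}\}$'' and set $A_j=\{2^{-j-1}\le x_1\le 2^{-j}\}\cap\Delta_s$. This is backwards: $\Delta_s$ is the \emph{inward}-cusp domain \eqref{innercuap}, so the narrow set $\{x_1>0,\ |x_2|\le x_1^{1+s}\}$ is precisely what has been removed --- it lies in $\bR^2\setminus\overline{\Delta_s}$, not in $\Delta_s$. If you really worked in $A_j\cap\Delta_s$ you would only be looking at $g^{-1}$, which is $K$-quasiconformal and carries no cusp obstruction whatsoever; the argument would yield nothing. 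The estimate must take place in the removed cusp, where the extension $f$ (not $g$) is doing the work. Relatedly, the stretching you need is \emph{transverse} to the cusp (the $x_2$-direction), not ``the $x_1$-direction'': the two endpoints $(x,\pm x^{1+s})$ of a vertical segment lie on $\partial\Delta_s$, and by symmetry plus \eqref{prop build qc1} their $f^{-1}$-images sit at distance $\approx x^{1/(2K)}$ on $\partial\mathbb D$. Your displayed lower bound $(2^{-j/(2K)})^p\cdot 2^{-j}\cdot(2^{-j(1+s)})^{1-p}$ is in fact the Jensen estimate on \emph{vertical} slices of length $2^{-j(1+s)}$, integrated over $x_1$ --- so the formula is right even though the accompanying words are not.

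Once the geometry is corrected, the dyadic decomposition and the curve--family modulus detour are unnecessary. The paper simply takes the vertical segment $I_x=\{x\}\times[-x^{1+s},x^{1+s}]$ in the removed cusp, uses the ACL property to get $\mathrm{osc}_{I_x}f^{-1}\le\int_{I_x}|Df^{-1}|$, applies Jensen to obtain $(\mathrm{osc}_{I_x}f^{-1})^p\,x^{-(s+1)(p-1)}\le\int_{I_x}|Df^{-1}|^p$, inserts $\mathrm{osc}_{I_x}f^{-1}\approx x^{1/(2K)}$, and integrates in $x$ via Fubini. The divergence condition $\tfrac{p}{2K}-(s+1)(p-1)\le -1$ is then exactly $p\ge \tfrac{2(s+2)}{2s+2-K^{-1}}$. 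Your plan reduces to this once you drop the modulus discussion (which, as your own ``no wait'' signals, was not going to close) and place the segments on the correct side of $\partial\Delta_s$.
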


\begin{proof}
Given a constant $c>0,$ we let $I_x = \{(x,y) : y \in [-|x|^{s+1} , |x|^{s+1}]\}$ for $x \in (0,c) .$
Via the ACL-property of Sobolev functions, we have that 
$$\mbox{osc}_{I_x} f^{-1} \le \int_{I_x} |D f^{-1}(x,y)|\, dy$$ 
for almost every $ x \in (0,c) .$
Jensen's inequality then implies that
\begin{equation}\label{negative part of f^-1: 2}
\frac{(\mbox{osc}_{I_x} f^{-1})^p}{x^{(s+1)(p-1)}} \le \int_{I_x} |D f^{-1} (x,y)|^p \, dy.
\end{equation}
Notice that by \eqref{prop build qc1} we have $\mbox{osc}_{I_x} f^{-1} \approx x^{1/(2K)} .$
Hence via Fubini's theorem we obtain from \eqref{negative part of f^-1: 2} that 
\begin{equation*}\label{infsupD_f^-1 10-1}
\int_{0} ^{c} x^{\frac{p}{2K} - (s+1)(p-1)} dx \lesssim \int_{B (0, 1)} |Df^{-1}|^p .
\end{equation*}
Therefore by the Sobolev assumption of $f^{-1},$ we have that $\frac{p}{2K} - (s+1)(p-1) >-1$, that is, $p<2(s+2)/(2s+2-K^{-1}).$
\end{proof}

\begin{remark}\label{rmk4.7}
Take $g \in \mcg_s(K)$ and $f \in \mcf_s (g).$
Notice that by \cite[Lemma 4.2]{gkt13}
$$\mbox{osc}_{I_x} f^{-1}  \gtrsim  x ^{K/(2-\epsilon)} $$
for all $x \in (0,c).$
Analogously to Lemma \ref{lem4.4}, if $f^{-1} \in W^{1,p} _{\loc} (\mbr^2 ,\mbr^2)$ for $p \ge 1$, 
then $p \le 2(s+2)/(2s+2 -K)_+ .$ 
\end{remark}

\begin{proof}[Proof of Theorem \ref{thm1}]
First of all, Lemma \ref{lem5.25} shows that $\mcf_s (g) \neq \emptyset$ for any $g \in \mcg_s (K).$
By \eqref{5.25-3}, it is obvious that $2K/(K-1)$ is an upper bound of the supreme in \eqref{supD_f}. Moreover by \eqref{lem5.25-1} we obtain that this supreme equals $2K/(K-1).$ This proves \eqref{supD_f}.
By \eqref{lem5.25-2} we see that $2(2+s)/(2s +2 -K^{-1})$ is a lower bound for the infimum in \eqref{infsupD_f^-1}.
Together with Lemma \ref{lem4.4} we may conclude \eqref{infsupD_f^-1}.

It remains to prove \eqref{supsupD_f^-1}.
By Remark \ref{rmk4.7} and \eqref{5.25-3}, 
we obtain that the minimum in \eqref{supsupD_f^-1} is an upper bound for the supreme in \eqref{supsupD_f^-1}.
Let $g$ be as in Remark \ref{after prop build qc}.
By the analogous proof of $\mathcal{F}_s (g)\neq \emptyset $ for Theorem \ref{theorem K_f},
we construct by hand a $f \in \mcf_s (g)$ satisfying that
$f^{-1} \in W^{1,p} _{\loc}(\mbr^2 ,\mbr^2)$ for all $p< \min \left\{2K/(K-1),2(s+2)/(2s+2 -K)_+ \right \} .$
We leave the details to interested readers. Hence the minimum in \eqref{supsupD_f^-1} is a lower bound for the supreme in \eqref{supsupD_f^-1}. The proof is therefore complete.
\end{proof}

Comparing Theorem \ref{thm1} with \cite[Theorem 1.2]{x19}, we miss the optimal regularity of distortions of extension and its inverse. We formulate the missing part of Theorem \ref{thm1} as a conjecture below.

\begin{conjecture}\label{conj1}
Let $\mcg_s (K)$ be as in \eqref{g_sK} and $\mcf_s (g)$ be as in \eqref{f_sg}. We conjecture that the following equations hold:
\begin{equation*}\label{infsupK_f}
\inf_{g \in \mcg_s (K)} \sup \{q \in (0,\infty) : f \in \mcf_s (g) ,\ K_f \in L^q _{\loc} (\mbr^2)  \} 
 =    \max \left\{\frac{1}{ K s}  , 1\right\} ,
\end{equation*} 
\begin{equation*}\label{supsupK_f}
\sup_{g \in \mcg_s (K)} \sup \{q \in (0,\infty) : f \in \mcf_s (g) ,\ K_f \in L^q _{\loc} (\mbr^2)  \} 
 =   \max \left\{\frac{K}{s}  , 1 \right\}, 
\end{equation*}
\begin{equation*}\label{infsupK_f^-1}
\sup \{q \in (0,\infty) : f \in \mcf_s (g) ,\ K_{f^{-1}} \in L^q _{\loc} (\mbr^2)  \} 
 =  \frac{2+s}{s} 
\end{equation*} 
for any $g \in \mcg_s (K).$
\end{conjecture}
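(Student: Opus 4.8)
The plan is to establish each of the three identities in Conjecture~\ref{conj1} by the same two-sided method used for the Sobolev exponents in Theorem~\ref{thm1}: produce an upper bound for the relevant exponent valid for \emph{every} $f\in\mcf_s(g)$ (the ``no extension does better'' half), and a matching lower bound realized by a concrete extension of a concrete model map. Upper bounds come from pointwise distortion inequalities combined with the forced oscillation estimates of Lemma~\ref{lem4.4} and Remark~\ref{rmk4.7}, now translated into \emph{lower} bounds for $\int_{I_x}K_f$ or $\int_{I_x}K_{f^{-1}}$ along the cusp; lower bounds come either from the composition $f=\Phi\circ\Psi$ of Lemma~\ref{lem5.25} or from the explicit ``by hand'' construction near the cusp tip of \cite[Proof of Theorem 1.2]{x19}.

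Several parts should follow with only routine bookkeeping. For the $K_{f^{-1}}$ identity, the ``$\ge$'' direction is immediate from the composition: since $f^{-1}=\Psi^{-1}\circ\Phi^{-1}$ one has $K_{f^{-1}}(w)\le K_{\Psi^{-1}}(\Phi^{-1}(w))\,K_{\Phi^{-1}}(w)\le K\,K_{\Phi^{-1}}(w)$ at the same point $w$, so \eqref{theorem f^-1 : 2} gives $K_{f^{-1}}\in L^q_{\loc}$ for every $q<(2+s)/s$ and every $g\in\mcg_s(K)$. The lower bounds for the two $K_f$ identities also come from the composition: from $K_f(z)\le K\,K_\Phi(\Psi(z))$ one argues with the area formula and a H\"older splitting exactly as in the estimate of $I$ in the proof of Lemma~\ref{lem5.25}, feeding in the finer local integrability of $K_\Phi$ from \cite[Theorem 1.2]{x19}; the resulting exponent should be pushed down to $\max\{1/(Ks),1\}$ for a generic $g$ (giving the $\inf_g$ bound) and up to $\max\{K/s,1\}$ when $g$ is the ``loose'' model of Remark~\ref{after prop build qc}. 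Finally, the upper bound $\max\{K/s,1\}$ for the $\sup_g$ identity, and the upper bound $(2+s)/s$ for the $K_{f^{-1}}$ identity, should match the known ceilings recorded after Theorems~\ref{thm:inner cusp domain} and~\ref{theorem K_f}, once one accounts for the trivial constraint $K_f\ge 1$.

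The genuinely new work lies in two places. The first is the lower bound for the $\sup_g$ identity: one needs an extension of the loose model $g$ of Remark~\ref{after prop build qc} (boundary scaling $r^{2/K}$) whose distortion lies in $L^q_{\loc}$ up to $\max\{K/s,1\}$, and the composed map $\Phi\circ\Psi$ is in general too crude for this, so one redoes the explicit construction of \cite[Step 1, Proof of Theorem 1.2]{x19} near the cusp tip, adapted to the $r^{2/K}$ scaling, glues it to a bi-Lipschitz extension away from the tip via the chord-arc characterization \cite{Tukia 1980}, and tracks the asymptotics of $\|Df\|$ and $J_f$ along the cusp. The second is the upper bound $\max\{1/(Ks),1\}$ for the $\inf_g$ identity, which is \emph{strictly below} the generic ceiling $K/s$ and so cannot be read off from \cite{gkt13}: here one feeds the ``sharp'' model $g$ of Example~\ref{prop build qc} (boundary scaling $r^{2K}$) into a Lemma~\ref{lem4.4}-type foliation of a neighborhood of the cusp tip by the segments $I_x$, and the key point is that bounding $\int_{I_x}K_f$ from below requires, beyond the ACL oscillation estimate $\mbox{osc}_{I_x}f^{-1}\approx x^{1/(2K)}$, a matching \emph{lower} bound for $J_f$ --- equivalently an area estimate for the thin sub-cusps $\{(x',y):0<x'<x,\ |y|<|x'|^{s+1}\}$ --- after which $K_f=\|Df\|^2/J_f$ can be integrated in cusp coordinates to yield divergence of $\int_0^c x^{\beta(q)}\,dx$ exactly at the conjectured exponent.

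I expect the main obstacle to be precisely these Jacobian/area lower bounds and the sharp asymptotic bookkeeping in the hand-made cusp constructions; a secondary difficulty, concentrated in the regime of small $s$ where $K_f$ need not even be locally integrable, is to make precise sense of the ``$\max$ with $1$'' and to verify that the infimum and supremum over $g\in\mcg_s(K)$ are indeed attained in the limit by the two model families of Example~\ref{prop build qc} and Remark~\ref{after prop build qc}.
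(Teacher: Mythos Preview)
The statement you are attempting to prove is labeled \textbf{Conjecture}~\ref{conj1} in the paper, and the paper does \emph{not} provide a proof. The authors write explicitly that, compared with \cite[Theorem~1.2]{x19}, ``we miss the optimal regularity of distortions of extension and its inverse'' and then ``formulate the missing part of Theorem~\ref{thm1} as a conjecture.'' So there is no paper proof to compare your proposal against: you are outlining a research program to resolve an open problem, not reproducing an argument the authors have.

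Your plan is a natural one and several pieces are indeed routine. The submultiplicativity bound $K_{f^{-1}}\le K\cdot K_{\Phi^{-1}}$ combined with \eqref{theorem f^-1 : 2} does give $K_{f^{-1}}\in L^q_{\loc}$ for all $q<(2+s)/s$ and every $g\in\mcg_s(K)$, so the ``$\ge$'' half of the third identity follows as you say. Likewise, the composition $f=\Phi\circ\Psi$ together with the sharper local integrability of $K_\Phi$ from \cite{x19} is the right starting point for the lower bounds in the first two identities.

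However, the two places you flag as ``genuinely new work'' are exactly the obstructions that kept the authors from upgrading the conjecture to a theorem. For the upper bound $\max\{1/(Ks),1\}$ in the $\inf_g$ identity, the Lemma~\ref{lem4.4} machinery gives oscillation control of $f^{-1}$ along the segments $I_x$, but converting this into a \emph{lower} bound on $\int_{I_x}K_f$ really does require a lower bound on $J_f$ (or an upper bound on $J_{f^{-1}}$) near the cusp tip, and no such bound is available for a general $f\in\mcf_s(g)$: the extension is unconstrained outside $\mbd$, so $J_f$ can be made large or small on $I_x$ at will, and you cannot simply read it off from an area estimate of a sub-cusp. Similarly, for the hand-made extension of the loose model of Remark~\ref{after prop build qc} you would need to carry out the full cusp construction of \cite[Step~1]{x19} with the altered boundary scaling $r^{2/K}$ and verify that the distortion integrability really improves all the way to $K/s$; this is plausible but is a genuine computation, not a citation. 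Until those two points are resolved your outline remains a proposal rather than a proof, consistent with the paper's own assessment.
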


Note that Conjecture \ref{conj1} is closely related to Theorem \ref{thm:inner cusp domain}.
 As $\Delta_s$ from \eqref{innercuap} is a special example of the more general class of John disks, it is nature to pose the following question.
\begin{question}
Study analogous results in Theorem \ref{thm1} when $\Delta_s$ is replaced by a John disk.  
\end{question}

\subsection*{Acknowledgements}
This survey is dedicated to our former Ph.D supervisor Prof.~Pekka Koskela for his excellent guidance during our postgraduate studies and for bringing us to the world of quasiconformal analysis.

\end{document}